\documentclass[11pt,a4paper]{article}
\usepackage[T1]{fontenc}
\usepackage{lmodern,amsmath,amssymb,amsthm,mathtools,microtype}
\usepackage[margin=28mm]{geometry}
\usepackage{xcolor}
\usepackage[colorlinks=true,linkcolor=blue!45!black,urlcolor=blue!45!black,citecolor=blue!45!black]{hyperref}
\usepackage{fancyhdr}
\numberwithin{equation}{section}
\newtheorem{theorem}{Theorem}
\newtheorem{lemma}{Lemma}

\theoremstyle{remark}
\newtheorem*{remark}{Remark}
\newcommand{\e}{\mathrm{e}}
\newcommand{\R}{\mathbb R}
\newcommand{\Z}{\mathbb Z}
\newcommand{\M}{\mathfrak M}
\newcommand{\m}{\mathfrak m}
\newcommand{\SSS}{\mathfrak S}
\newcommand{\ph}{\varphi}
\newcommand{\eps}{\varepsilon}
\newcommand{\prim}{\sideset{}{^{*}}\sum}
\DeclareMathOperator{\Ree}{Re}

\title{A prime plus a square\\[3pt]
A conditional $11/20+\varepsilon$ exceptional-set bound}
\author{J\'anos Pintz\thanks{Keywords: prime plus square; exceptional numbers.\newline
2020 Mathematics Subject Classification: Primary 11P32; Secondary 11P55, 11M26.\newline
The first author was supported by the National Research Development
and Innovation Office of Hungary, NKFIH, K133819, K147153,
Excellence 151341.} \and Ingo Alth\"ofer}
\date{12 October 2026}
\hypersetup{pdftitle={A prime plus a square: a conditional 11/20 exceptional-set bound}}
\begin{document}
\maketitle
\thispagestyle{plain}
\begin{abstract}
Let $E_\square(N)$ count the positive integers at most $N$ that cannot
be expressed as a prime plus a nonnegative integer square.
Assuming that every nontrivial zero of every primitive Dirichlet
$L$-function has real part at most $7/8$, we prove
$E_\square(N)\ll_\eps N^{11/20+\eps}$ for every $\eps>0$.
The proof combines a conductor-weighted mean-square estimate with
a uniform estimate for certain families having a prescribed quadratic
component. For these families, the large sieve depends on the varying
part of the conductor, while an approximate functional equation controls
the fourth moment using the full conductor. A zero-detection argument
then absorbs all bands of zeros with real part at least $129/167$.
The remaining bands and the minor arcs give the exponent $11/20$.
The proof includes even denominators, completion and positivity of the
singular series, and removal of proper prime powers.
\end{abstract}

\section{Statement and scope}

Let
\[
 E_\square(N)=\#\{n\in\Z:1\le n\le N,\quad
 n\ne p+k^2\text{ for every prime }p\text{ and integer }k\ge0\}.
\]
The value $k=0$ is allowed. Our hypothesis is
\begin{equation}\label{hyp}
 H_{7/8}:\qquad
 \Ree\rho\le\frac78
 \quad\text{for every nontrivial zero of every primitive Dirichlet }
 L(s,\chi).
\end{equation}
The primitive character of conductor $1$ is included.
\begin{theorem}\label{thm:main}
Assume \eqref{hyp}. For every $\eps>0$,
\begin{equation}\label{eq:theorem}
 E_\square(N)\ll_\eps N^{11/20+\eps}\qquad(N\ge2).
\end{equation}
\end{theorem}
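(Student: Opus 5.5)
We will use the circle method in its mean-square form. For $N<n\le 2N$ (dyadic ranges suffice) let
\[
 r(n)=\sum_{p+k^2=n}(\log p)\,w(k),
\]
with $k\le \sqrt{2N}$, and write $r(n)=\int_0^1 S(\alpha)W(\alpha)\e(-n\alpha)\,d\alpha$. Here $S(\alpha)=\sum_{p\le 2N}(\log p)\e(p\alpha)$ and $W(\alpha)=\sum_{k\le\sqrt{2N}}\e(k^2\alpha)$; proper prime powers will be added in and removed at cost $O(N^{1/2+\eps})$ per $n$ on average. Take major arcs $\M$ around $a/q$ with $q\le Q$ and $|\alpha-a/q|\le Q/(qN)$, and let $\m$ be the complement. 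The expected main term is $\SSS(n)\sqrt n$. We first complete the singular series, including even $q$ and the $2$-adic factor. Then we show $\SSS(n)\gg (\log\log n)^{-1}$ or a similar lower bound unless $n$ is a square. Squares contribute only $O(N^{1/2})$ exceptions. Every exceptional $n$ is then one where $|r(n)-\SSS(n)\sqrt n|\gg N^{1/2-\eps}$. By Bessel/Parseval,
\[
 E_\square(2N)-E_\square(N)\ll N^{-1+\eps}\Bigl(\int_\m |SW|^2+\sum_n\Bigl|\int_\M SW\e(-n\alpha)-\SSS(n)\sqrt n\Bigr|^2\Bigr).
\]

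\textbf{Minor arcs.} We bound $\int_\m|SW|^2\le \sup_\m|W|^2\int|S|^2$. Weyl's inequality gives $\sup_\m|W|\ll N^{1/2+\eps}Q^{-1/2}$, and $\int|S|^2\ll N\log N$. The minor-arc contribution is therefore $N^{1+\eps}/Q$. Choosing $Q=N^{9/20}$ matches the target $N^{11/20}$, so this fixes $Q$.

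\textbf{Major arcs.} On $\M$ we expand $S(a/q+\beta)$ via characters. The nonprincipal and error parts reduce to sums over $\chi$ mod $q$, $q\le Q$, of $\psi(x,\chi)$-type integrals twisted with the Gauss sums of $W$. By the explicit formula, this amounts to bounding, for each band $\sigma\le\Ree\rho\le\sigma+1/\log N$, a weighted zero count $\sum_{q\le Q}\sum_\chi\#\{\rho\}$. The weights come from the quadratic Gauss sum, which has size $q^{-1/2}$ and has a quadratic character component. The contribution of a band is roughly $N^{1+2\sigma-1}\cdot(\text{weighted count})\cdot N^{-1}$ after dividing by $N^{1-2\eps}$.

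We estimate the weighted count by a zero-detection argument. The first input is a conductor-weighted mean-value (large sieve) estimate for Dirichlet polynomials over the families $\chi\chi_0$, where $\chi_0$ is the fixed quadratic part induced by the Gauss sum; the large sieve sees only the varying conductor. The second input is a fourth-moment bound from an approximate functional equation, which sees the full conductor. Combining these in a Huxley/Jutila-type density estimate gives the bound $\ll (Q^2T)^{A(1-\sigma)}$ uniformly over the families. Under $H_{7/8}$ only $\sigma\le7/8$ matters. We then check that every band with $\sigma\ge129/167$ contributes $\ll N^{11/20+\eps}$ through zero detection, using the $7/8$ cap so that the density exponent beats $N^{2\sigma-1}$. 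For $\sigma<129/167$ the trivial bound $N^{2\sigma-1}\cdot Q\cdot(\text{total zero count})$ also yields $\le N^{11/20}$.

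\textbf{Main obstacle.} The hardest step is the uniform density estimate for the families with prescribed quadratic component. The large sieve and the fourth moment see different conductors, and the numerics must close exactly at $129/167$ and $11/20$. We would first try the Huxley argument with $R$ detectors and separate optimization of the mollifier length, checking the endpoint $\sigma=129/167$ by hand.
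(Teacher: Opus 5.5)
The gap is in your treatment of the zeros with $\Ree\rho\ge129/167$. You keep every band inside the Bessel/Parseval mean square and ask a density estimate to give $N^{2\sigma-1}\cdot(\text{weighted count})\ll N^{11/20}$. This cannot work.

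After Parseval and summation over $q=rs$, a primitive character of conductor $r$ enters the mean square with weight $\asymp1/\ph(r)$. So bounded conductors, $\zeta$ included, carry weight $\asymp1$. Take a single zero $\rho=\beta+i\gamma$ of $\zeta$ with $\beta$ near $7/8$; $H_{7/8}$ does not exclude one. In the paper's notation it contributes to the major arcs
\[
 -\SSS_P(n)\int_\R v(t/Y)\,w\bigl((n-t^2)/X\bigr)(n-t^2)^{\rho-1}\,dt ,
\]
which is of order $X^{\beta-1/2}$ for typical $n$, with constants depending on $\rho$. Its mean square over $X<n\le2X$ is therefore genuinely of order $X^{2\beta}$, up to logarithms. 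Chebyshev's inequality then allows about $X^{2\beta-1}\approx X^{3/4}$ exceptions.

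No density estimate can repair this, because the count is already at least one. For $\sigma>31/40$ your requirement would force the weighted count below a negative power of $N$. What you need for these bands is a bound that is \emph{pointwise and uniform in $n$}, of size $o(\sqrt X/\log X)$. They are then absorbed into the main term and never enter the mean square.

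That is exactly where the paper uses the quadratic twists:
\begin{enumerate}
\item It keeps the phase $\e(-nb/q)$ before taking absolute values, and bounds $\prim_{b\bmod q}g_q(b)\chi(b)\e(-nb/q)$ by $2\sqrt{(q,n)}$.
\item This sum vanishes unless the component of $\psi$ at each odd $p\mid(r,n)$ is $(\cdot/p)$. So $\psi=\lambda_d\psi_t$ with $d\mid n$, and $\sqrt{(r,n)/r}\le\sqrt{8/t}$.
\item Each such family is counted by $K^\nu((K+T^2)T^4)^b$, with $b=(1-\sigma)/(2-\sigma)$, and this is balanced against $K^{4(1-\sigma)}$.
\end{enumerate}
With $K=P=X^{9/20}$ this gives $|B_I(n)|\ll X^{1-13\sigma/20+\omega+\nu}$ for every $n$, and $1-13\sigma/20\le1/2-7/3340$ for $\sigma\ge129/167$.

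So the twisted families in your sketch play no role where you placed them. The component $\lambda_d$ exists only once $n$ is fixed, since $d$ is the odd part of $(r,n)$. Under Parseval the phase in $n$ disappears and only $|g_q|^2\le2/q$ survives. A bound $(Q^2T)^{A(1-\sigma)}$ with a single conductor parameter also cannot express the gain. The gain comes from three things together: the large sieve sees $T^2$, the fourth moment sees the full $K$, and the weight $t^{-1/2}$ depends only on $t$.

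Three further corrections:
\begin{itemize}
\item $129/167$ is not where the numerics close exactly. It is the range of Heath-Brown's bound $(R^2H^2)^{2(1-\sigma)+\xi}$. The break-even point is $10/13$, so there is a positive margin.
\item For the lower bands the relevant quantity is $\sum_{r\le Q}Z_r/r\ll Q^{a(\sigma)-1}$, not a factor $Q$. A trivial or Ingham count is insufficient: Ingham gives $2\sigma-1+\frac9{20}(a(\sigma)-1)=0.59$ at $\sigma=3/4$. You need Heath-Brown's $(R^2H^{6/5})^{(20/9)(1-\sigma)+\xi}$ on $(13/20,129/167)$, where this quantity is identically $11/20$.
\item Completing the singular series in mean square needs an actual argument. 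The paper uses Heath-Brown's quadratic large sieve to obtain $\ll X^\nu(X/P+\sqrt X)$.
\end{itemize}
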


This is a conditional theorem: a zero bound for the Riemann zeta
function alone does not supply \eqref{hyp}. Numerically,
$11/20=0.55$, a decrease of $1/180$ from $5/9$.
The statement retains an arbitrarily small positive loss in the
exponent; it does not assert the bound with exponent exactly $11/20$.

We write $\e(t)=\exp(2\pi it)$. All zeros are counted with multiplicity,
and starred character sums run over primitive characters. Symbols
$\nu,\omega>0$ denote arbitrarily small losses and may be decreased
between estimates. Implied constants may depend on these losses
and on fixed smooth weights. We prove the bound on $X<n\le2X$
and then sum dyadically.
\section{Density estimates and large-sieve inputs}

For primitive $\psi$, let $N(\sigma,H,\psi)$ count its nontrivial
zeros $\rho=\beta+i\gamma$ with $\beta\ge\sigma$ and $|\gamma|\le H$,
with multiplicity, and put
\[
 N_2(\sigma,R,H)=\sum_{r\le R}\prim_{\psi\bmod r}
                  N(\sigma,H,\psi).
\]
We use the Ingham estimate in \cite[Theorem 28.3, (28.11)]{MV}:
\begin{equation}\label{eq:ingham}
 N_2(\sigma,R,H)\ll
 (R^2H)^{3(1-\sigma)/(2-\sigma)}\log^6(2RH)
 \quad(1/2\le\sigma\le1;\ R,H\ge1).
\end{equation}
We also use two estimates of Heath-Brown. The first is
\cite[Theorem 2, equation (3)]{HBD}:
\begin{equation}\label{eq:HB-density}
 N_2(\sigma,R,H)\ll_\xi
 \bigl(R^2H^{6/5}\bigr)^{(20/9)(1-\sigma)+\xi}
 \quad(1/2\le\sigma\le1;\ R,H\ge1;\ \xi>0).
\end{equation}
The second is \cite[Theorem 3]{HBD}:
\begin{equation}\label{eq:HB-high}
 N_2(\sigma,R,H)\ll_\xi
 (R^2H^2)^{2(1-\sigma)+\xi}
 \quad(\sigma_0\le\sigma\le1),\qquad
 \sigma_0=\frac{129}{167}.
\end{equation}
After summation against a rapidly decreasing height weight, the last
two bounds give conductor exponents $(40/9)(1-\sigma)$ and
$4(1-\sigma)$, respectively.

For arbitrary complex coefficients, the multiplicative large sieve gives
\begin{equation}\label{eq:ordinary-LS}
 \sum_{t\le T}\sum_{\psi\bmod t}^{*}
 \left|\sum_{m\le M}c_m\psi(m)\right|^2
 \ll (T^2+M)\sum_{m\le M}|c_m|^2.
\end{equation}
Its hybrid version, integrated over a height interval of length
$O(H)$, has $T^2H+M$ in place of $T^2+M$, up to a harmless
logarithmic factor. We use the standard versions for finite
Dirichlet polynomials, including well-spaced evaluation points;
see \cite[Chapters 7 and 12]{Montgomery} and the zero-detection
formulation in \cite[Section 2]{CZ}.

The quadratic large sieve \cite[Theorem 1]{HB} states that
\begin{equation}\label{eq:HB}
 \sideset{}{^{\flat}}\sum_{d\le M}
 \left|\sideset{}{^{\flat}}\sum_{q\le U}b_q
       \left(\frac qd\right)\right|^2
 \ll_\omega (MU)^\omega(M+U)
       \sideset{}{^{\flat}}\sum_{q\le U}|b_q|^2
 \qquad(\omega>0).
\end{equation}
Both flat-marked sums range over positive odd squarefree integers;
the symbol is the Jacobi symbol. All the inputs in this section
are unconditional.

\section{Smooth weights and major arcs}

Fix nonnegative functions
\[
 v\in C_c^\infty((1/4,1/2)),\qquad
 w\in C_c^\infty((1/2,3)),
\]
with $v\not\equiv0$ and $w=1$ on $[3/4,2]$. Set $Y=\sqrt X$ and
\begin{align*}
 S(\alpha)&=\sum_{m\ge1}\Lambda(m)w(m/X)\e(m\alpha),\\
 \Theta(\alpha)&=\sum_{k\ge0}v(k/Y)\e(k^2\alpha),\\
 R_X(n)&=\int_0^1S(\alpha)\Theta(\alpha)\e(-n\alpha)\,d\alpha\\
 &=\sum_{k\ge0}v(k/Y)w((n-k^2)/X)\Lambda(n-k^2).
\end{align*}
We set $\Lambda(m)=0$ for $m\le0$. In particular $R_X(n)\ge0$.
Put
\[
 V(\beta)=\int_0^\infty w(t/X)\e(\beta t)\,dt,
 \qquad W(\beta)=\int_\R v(t/Y)\e(\beta t^2)\,dt.
\]
Since $t\asymp Y$ on the support in $W$, repeated integration by
parts gives, for every fixed $A>0$,
\begin{equation}\label{eq:VW}
 |V(\beta)|\ll_A X(1+X|\beta|)^{-A},\qquad
 |W(\beta)|\ll_A Y(1+X|\beta|)^{-A}.
\end{equation}
For $X<n\le2X$, Fourier inversion gives
\begin{equation}\label{eq:J}
 \int_\R V(\beta)W(\beta)\e(-n\beta)\,d\beta
 =\int_\R v(t/Y)w((n-t^2)/X)\,dt
 =\kappa\sqrt X,
 \qquad\kappa=\int_\R v(u)\,du>0.
\end{equation}
Indeed, $t^2<X/4$ on the support, so $(n-t^2)/X\in[3/4,2]$.

Choose
\begin{equation}\label{eq:PQ}
 P=X^{9/20},\qquad \eta=\frac1{100},\qquad
 Q=\left\lfloor\frac{X}{PX^\eta}\right\rfloor.
\end{equation}
For large $X$, $P=o(Y)$, $Q/Y\to\infty$, and $Q>2P$.
The major arcs $\M\subset\R/\Z$ are the disjoint arcs
\[
 \alpha=a/q+\beta,\qquad q\le P,\quad(a,q)=1,
 \quad |\beta|\le1/(qQ).
\]
For $q=1$ the single reduced residue is interpreted modulo $1$.
Let $\m$ be the complement. Dirichlet approximation gives, for
$\alpha\in\m$, a reduced $a/q$ with $P<q\le Q$ and
$|\alpha-a/q|\le1/(qQ)\le1/q^2$.
The quadratic Weyl bound and partial summation imply
\[
 |\Theta(\alpha)|^2
 \ll_\nu Y^{2+\nu}\left(q^{-1}+Y^{-1}+qY^{-2}\right)
 \ll_\nu X^{1+\nu}/P.
\]
A factor $2$ in the linear phase after differencing changes the
reduced denominator by at most a factor $2$, which is harmless here.
By Parseval and $\sum_{m\le3X}\Lambda(m)^2\ll X\log^2X$,
\begin{equation}\label{eq:minor}
 \sum_{n\in\Z}\left|\int_\m S(\alpha)\Theta(\alpha)
                   \e(-n\alpha)\,d\alpha\right|^2
 \ll_\nu X^{2+\nu}/P.
\end{equation}

\section{Complete square sums, including even denominators}

For $(a,q)=1$ define
\[
 G(a,q)=\sum_{b\bmod q}\e(ab^2/q),\qquad g_q(a)=q^{-1}G(a,q).
\]
The elementary identity
\[
 |G(a,q)|^2=
 \sum_{h\bmod q}\e(ah^2/q)
       \sum_{b\bmod q}\e(2ahb/q)
\]
shows that only $h=0$ contributes for odd $q$. For even $q$, only
$h=0,q/2$ contribute; the second contribution is $q(-1)^{q/2}$,
as $a$ is odd. Thus
\begin{equation}\label{eq:gauss}
 |g_q(a)|^2=
 \begin{cases}
 q^{-1},&q\text{ odd},\\
 0,&q\equiv2\pmod4,\\
 2q^{-1},&4\mid q.
 \end{cases}
 \qquad |g_q(a)|\le\sqrt{2/q}.
\end{equation}

Poisson summation in progressions modulo $q$ gives, uniformly on
all the major arcs,
\begin{equation}\label{eq:theta-major}
 \Theta(a/q+\beta)=g_q(a)W(\beta)+O_A(X^{-A})
 \qquad(A>0).
\end{equation}
To check uniformity, a nonzero Poisson mode $h$ has phase derivative
$2\beta t-h/q$, of magnitude $\gg |h|/q$ on the support, since
$|\beta|Y\le Y/(qQ)=o(1/q)$. Repeated integration by parts bounds
its integral by $C_BY(|h|Y/q)^{-B}$. The normalized complete Fourier
coefficients have absolute value at most $1$. Summing the nonzero
modes gives $O_B(Y(q/Y)^B)$, which is $O_A(X^{-A})$ because
$q\le P=X^{9/20}<X^{1/2}$ and $B$ can be arbitrarily large.

\section{A finite partition of the zeros}

For $1/2\le\sigma\le7/8$, use the conductor exponent
\begin{equation}\label{eq:a}
 a(\sigma)=
 \begin{cases}
 6(1-\sigma)/(2-\sigma),&1/2\le\sigma\le13/20,\\
 (40/9)(1-\sigma),&13/20<\sigma<\sigma_0,\\
 4(1-\sigma),&\sigma_0\le\sigma\le7/8.
 \end{cases}
\end{equation}
The first two expressions agree at $13/20$.
At $\sigma_0=129/167$ we switch to the stronger bound
\eqref{eq:HB-high}; thus $a$ has a downward jump there.
In particular,
\begin{equation}\label{eq:a-low}
 a(\sigma)>1\quad(1/2\le\sigma<\sigma_0),\qquad
 \frac12\le a(\sigma)<1\quad(\sigma_0\le\sigma\le7/8).
\end{equation}
The limiting value from the left at $\sigma_0$ is $1520/1503>1$.

\begin{lemma}\label{lem:density}
For every $\nu>0$ and $R\ge1$,
\begin{equation}\label{eq:density}
 \sum_{r\le R}\sum_{\psi\bmod r}^{*}
 \sum_{\substack{\rho=\beta+i\gamma\\\beta\ge\sigma}}
       (1+|\gamma|)^{-10}
 \ll_\nu R^{a(\sigma)+\nu}
 \qquad(1/2\le\sigma\le7/8).
\end{equation}
For zeros with real part less than $1/2$, the same bound holds with
conductor exponent $2$.
\end{lemma}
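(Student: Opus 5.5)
Dyadic decomposition in the height $\gamma$ reduces the weighted count to the unweighted density estimates of Section~2. Let $S_j$ be the contribution of zeros with $2^{j}-1\le|\gamma|<2^{j+1}-1$, that is, $1+|\gamma|\asymp 2^j$, for $j\ge0$. Then
\[
 S_j\ll 2^{-10j}\,N_2(\sigma,R,2^{j+1}),
\]
so the left side of \eqref{eq:density} is at most $\sum_{j\ge0}2^{-10j}N_2(\sigma,R,2^{j+1})$. We bound $N_2$ in each of the three ranges of \eqref{eq:a} by the matching density estimate and sum the geometric series in $j$. The height exponents are small, and the factor $2^{-10j}$ kills them.

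\textbf{The three ranges of $\sigma$.}
\begin{itemize}
\item For $1/2\le\sigma\le13/20$ we apply \eqref{eq:ingham} with $H=2^{j+1}$. This gives $(R^22^{j})^{3(1-\sigma)/(2-\sigma)}$ times logarithms, and the logarithms are absorbed into $R^\nu 2^{j\nu}$. The height exponent is at most $1$, so the $j$-sum converges to $R^{6(1-\sigma)/(2-\sigma)+\nu}$.
\item For $13/20<\sigma<\sigma_0$ we apply \eqref{eq:HB-density} with $\xi=\nu/2$. The conductor exponent is $2\cdot(20/9)(1-\sigma)+2\xi=(40/9)(1-\sigma)+\nu$. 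The height exponent is $(6/5)\bigl((20/9)(1-\sigma)+\xi\bigr)\le2$, which is again harmless.
\item For $\sigma_0\le\sigma\le7/8$ we apply \eqref{eq:HB-high}, which gives $R^{4(1-\sigma)+2\xi}$ and height exponent at most $2$.
\end{itemize}
The three outputs are exactly $R^{a(\sigma)+\nu}$ in the three ranges of \eqref{eq:a}. For the first two ranges we could equally take whichever of \eqref{eq:ingham} and \eqref{eq:HB-density} is better. This is not needed, since $a$ is defined range by range and the two expressions agree at $13/20$.

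\textbf{Zeros with $\beta<1/2$.} Here we need the count of all nontrivial zeros with $1+|\gamma|\asymp2^j$ across primitive characters of conductor up to $R$. By the Riemann--von Mangoldt formula for $L(s,\psi)$, the number of zeros with $|\gamma|\le H$ is $\ll H\log(rH)$. There are at most $r$ primitive characters modulo $r$, and $\sum_{r\le R}r\ll R^2$. Hence
\[
 N_2(0,R,H)\ll R^2H\log(RH).
\]
Multiplying by $2^{-10j}$ and summing over $j$ gives $R^{2+\nu}$, which is the claimed bound with conductor exponent $2$. Alternatively, one can note that \eqref{eq:ingham} at $\sigma=1/2$ already gives conductor exponent $2$. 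The functional equation then pairs zeros with $\beta<1/2$ with zeros of $L(s,\bar\psi)$ with $\beta>1/2$, and $\bar\psi$ has the same conductor, so these zeros are also covered.

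\textbf{Main obstacle.} The only delicate point is uniformity: every implied constant must be independent of $\sigma$ and $j$. Each density estimate is uniform in $\sigma$ on its range, and $\xi$ is fixed in terms of $\nu$, so this holds. One must also check that the height exponent never reaches $10$, which would make the $j$-sum diverge. The largest height exponent is $2+2\xi$, from \eqref{eq:HB-high}, so there is ample room.
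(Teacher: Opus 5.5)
Your proposal is correct and follows the paper's proof: dyadic height blocks (with $|\gamma|\le1$ in the first), the matching estimate \eqref{eq:ingham}, \eqref{eq:HB-density} or \eqref{eq:HB-high} in each range of $a(\sigma)$, the weight $(1+|\gamma|)^{-10}$ to make the height series converge, and ordinary zero counting for $\beta<1/2$. One small slip: the height exponent in \eqref{eq:HB-high} is $4(1-\sigma)+2\xi<1$ on $[\sigma_0,7/8]$, not $2+2\xi$, but this is harmless, since any height exponent below $10$ suffices.
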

\begin{proof}
Apply the corresponding estimate \eqref{eq:ingham},
\eqref{eq:HB-density}, or \eqref{eq:HB-high} on dyadic height
blocks, including $|\gamma|\le1$ in the first block.
Choose $\xi$ sufficiently small relative to $\nu$.
All height exponents in these estimates are less than $2$ in
their ranges of use, so the weight of exponent $10$ makes the
height sums converge. Absorb logarithms into $R^\nu$, adjusting
constants for bounded $R$. Ordinary zero counting gives conductor
exponent $2$ for the zeros with real part less than $1/2$.
\end{proof}

Fix small positive $\omega,\delta<1/100$, independent of $X$.
Partition $[1/2,\sigma_0)$ and $[\sigma_0,7/8]$ into finitely many
bands $I$, each of width at most $\omega$, using $13/20$ as a
partition point as well. Assign each shared endpoint to exactly
one band; assign $\sigma_0$ to an upper band.
For a band let $\sigma_I,h_I$ be its lower and upper endpoints,
and put $a_I=a(\sigma_I)$.
Adjoin the group $I_-$ of all zeros with $\beta<1/2$, setting
$h_{I_-}=1/2$ and $a_{I_-}=2$.
Under \eqref{hyp}, these groups exhaust the nontrivial zeros.
Their number is finite and depends only on $\omega$.

Call $I_-$ and the bands below $\sigma_0$ the \emph{lower groups},
and call the bands at or above $\sigma_0$ the \emph{upper groups}.
For a primitive character $\psi$ of conductor $r$, define
\[
 Z_{\psi,I}=\sum_{\rho\in I}(1+|\Im\rho|)^{-10},\qquad
 Z_{r,I}=\sum_{\psi\bmod r}^{*}Z_{\psi,I}.
\]
Here $\rho\in I$ refers to its assigned group of real parts.
Lemma~\ref{lem:density} gives
\begin{equation}\label{eq:band-density}
 \sum_{r\le R}Z_{r,I}\ll_\nu R^{a_I+\nu}.
\end{equation}
Also $Z_{\psi,I}\ll\log(r+2)$ by ordinary zero counting.

\section{The explicit formula, separated by groups}

For primitive $\psi$ of conductor $r$, put
\[
 E_\psi(\beta)=\sum_{m\ge1}\Lambda(m)\psi(m)w(m/X)\e(m\beta)
                         -\delta_\psi V(\beta),
\]
where $\delta_\psi=1$ exactly when $r=1$.
Define
\[
 M_\beta(s)=\int_0^\infty w(t/X)\e(\beta t)t^{s-1}\,dt,
 \qquad E_{\psi,I}(\beta)=-\sum_{\rho\in I}M_\beta(\rho).
\]
For $-1/2\le u\le2$, repeated integration by parts in $\log(t/X)$
gives
\begin{equation}\label{eq:Mellin}
 |M_\beta(u+i\tau)|\ll
 X^u(1+X|\beta|)^{10}(1+|\tau|)^{-10}.
\end{equation}
Mellin inversion against $-L'/L$, followed by a shift from
$\Ree s=2$ to $\Ree s=-1/2$, yields
\begin{equation}\label{eq:explicit}
 E_\psi(\beta)=\sum_I E_{\psi,I}(\beta)
       +O\bigl((1+X|\beta|)^{10}\log(r+2)\bigr).
\end{equation}
The pole at $1$ gives the subtracted term. The only possible
trivial-zero residue crossed is at $0$, of bounded multiplicity.
On the new line the functional equation and the absolutely convergent
logarithmic derivative at $3/2-i\tau$ give
$L'/L\ll\log(r(|\tau|+2))$; together with \eqref{eq:Mellin}, this
bounds the new integral and the trivial-zero contribution as stated.
The shift is first made at admissible finite heights and then passed
to the limit. The zero sums converge absolutely by \eqref{eq:Mellin}.

For each group, taking absolute values directly gives
\begin{equation}\label{eq:band-linear}
 |E_{\psi,I}(\beta)|\ll
 X^{h_I}(1+X|\beta|)^{10}Z_{\psi,I}.
\end{equation}
Cauchy's inequality over the zeros, using $Z_{\psi,I}\ll\log(r+2)$,
also gives
\[
 |E_{\psi,I}(\beta)|^2\ll
 X^{2h_I}(1+X|\beta|)^{20}\log(r+2)Z_{\psi,I}.
\]
Since \eqref{eq:VW} implies
$\int_\R|W(\beta)|^2(1+X|\beta|)^{20}\,d\beta\ll Y^2/X=1$,
we obtain
\begin{equation}\label{eq:band-integral}
 \int_\R|W(\beta)|^2|E_{\psi,I}(\beta)|^2\,d\beta
 \ll X^{2h_I}\log(r+2)Z_{\psi,I}.
\end{equation}

For $q\le P$ and $(a,q)=1$, write
\[
 A_q(a,\beta)=S(a/q+\beta)-\frac{\mu(q)}{\ph(q)}V(\beta),
 \qquad \tau(\chi)=\sum_{b\bmod q}\chi(b)\e(b/q).
\]
If $\chi^*$ is the primitive character inducing $\chi$, define
\[
 A_{q,I}(a,\beta)=\frac1{\ph(q)}\sum_{\chi\bmod q}
       \tau(\overline\chi)\chi(a)E_{\chi^*,I}(\beta).
\]
Character expansion and \eqref{eq:explicit} give
\begin{equation}\label{eq:A-expansion}
 A_q(a,\beta)=\sum_I A_{q,I}(a,\beta)+D_q(a,\beta),
\end{equation}
with
\begin{equation}\label{eq:D-trace}
 \sum_{a\bmod q}^{*}|D_q(a,\beta)|^2
 \ll q\log^C X\,(1+X|\beta|)^{20}
\end{equation}
for an absolute constant $C$.
Here the terms with $(m,q)>1$ and the changes from induced to
primitive characters involve only prime powers with base dividing
$q$; their individual weights are $O(\log^2X)$.
For these terms and the remainder in \eqref{eq:explicit}, character
orthogonality and the exact identity
$\sum_{\chi\bmod q}|\tau(\chi)|^2=\ph(q)^2$
prove \eqref{eq:D-trace}.
For each group, orthogonality also gives the exact identity
\begin{equation}\label{eq:orthog}
 \sum_{a\bmod q}^{*}|A_{q,I}(a,\beta)|^2
 =\frac1{\ph(q)}\sum_{\chi\bmod q}
       |\tau(\chi)|^2|E_{\chi^*,I}(\beta)|^2.
\end{equation}
If $q=rs$ and $\chi$ is induced by primitive $\psi\bmod r$, then
\begin{equation}\label{eq:induced}
 \tau(\chi)=\mu(s)\psi(s)\tau(\psi).
\end{equation}
This vanishes unless $s$ is squarefree and $(r,s)=1$; otherwise
$|\tau(\chi)|=\sqrt r$. All these formulas include even moduli.

\section{A complete character-sum estimate}

The next estimate retains the additive phase in the complete residue
sum. Write $(u,v)=\gcd(u,v)$.
\begin{lemma}\label{lem:complete}
For every positive integer $q$, every Dirichlet character $\chi\bmod q$,
and every integer $n$,
\begin{equation}\label{eq:complete}
 \left|\sum_{b\bmod q}^{*}g_q(b)\chi(b)\e(-nb/q)\right|
 \le2\sqrt{(q,n)}.
\end{equation}
\end{lemma}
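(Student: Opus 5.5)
Expand $g_q(b)$ by its definition and reduce to one-dimensional complete sums. Writing $g_q(b)=q^{-1}\sum_{x\bmod q}\e(bx^2/q)$ and interchanging sums, the left side of \eqref{eq:complete} becomes
\[
 \frac1q\sum_{x\bmod q}\ \sideset{}{^{*}}\sum_{b\bmod q}\chi(b)\,\e\bigl(b(x^2-n)/q\bigr)
 =\frac1q\sum_{x\bmod q}c_\chi(x^2-n),
\]
where $c_\chi(m)=\sum^*_{b}\chi(b)\e(bm/q)$ is a twisted Ramanujan (Gauss) sum. A cleaner route is to use multiplicativity. By the Chinese remainder theorem, $g_q(b)$ factors as $\prod_{p^k\| q}g_{p^k}(b\,\bar{q_p})$, where $q_p=q/p^k$, and $\chi$ factors into characters modulo each $p^k$. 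Likewise $\e(-nb/q)$ factors with twisted arguments. Hence the sum factors as a product over $p^k\| q$ of local sums of the same shape. The twist by the unit $\bar{q_p}$ can be absorbed by the substitution $b\mapsto b q_p$, which changes only a unimodular factor $\chi_p(q_p)$ and $g_{p^k}$ at a fixed unit. It therefore suffices to bound each local sum $T(p^k)$ by $\sqrt{(p^k,n)}$ for odd $p$, and by $2\sqrt{(2^k,n)}$ for $p=2$. The constant $2$ then arises only once.

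For odd $p^k$, use the explicit evaluation $g_{p^k}(b)=p^{-k}\left(\frac{b}{p}\right)^{k}\eps_{p^k}\sqrt{p^k}$ with $|\eps|=1$. The local sum then becomes $p^{-k/2}$ times a Gauss sum $\sum^*_b\chi'(b)\e(-nb/p^k)$, where $\chi'=\chi_p\cdot(\cdot/p)^k$. I would invoke the standard bound $|\sum^*_{b\bmod p^k}\chi'(b)\e(mb/p^k)|\le \sqrt{p^k}\sqrt{(p^k,m)}$. This holds directly when $\chi'$ is primitive (the sum is zero unless $p\nmid m$, where it has modulus $p^{k/2}$). In the imprimitive case, reduce to the primitive inducing character of conductor $p^j$. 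The sum then vanishes unless $p^{k-j}\mid m$, and after reduction it is bounded by $p^{k-j}\cdot p^{j/2}$ when $j\ge1$. For trivial $\chi'$ it is the Ramanujan sum, bounded by $(p^k,m)$. In every case the bound is at most $\sqrt{p^k(p^k,m)}$, since the sum is nonzero only if $p^{k-j}\mid m$, giving $p^{k-j/2}\le p^{k/2}(p^k,m)^{1/2}$. Multiplying by $p^{-k/2}$ yields $T(p^k)\le\sqrt{(p^k,n)}$.

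For $p=2$ the same scheme applies, using \eqref{eq:gauss}. If $k=1$, $g_2\equiv0$ on units, so $T=0$. If $k\ge2$, the Gauss sum $G(b,2^k)$ equals $(1+i^b)\left(\frac{2^k}{b}\right)2^{k/2}$ up to the standard formula. Thus $g_{2^k}(b)=2^{-k/2}\theta(b)$, where $\theta(b)$ is a linear combination, with coefficients of modulus at most $1$, of at most two characters modulo $8$ (the factor $1+i^b$ splits into $1$ and $i\cdot\chi_{-4}(b)$). Each piece is again a character twist of an additive sum modulo $2^k$. The same imprimitive Gauss-sum bound gives at most $\sqrt{2^k(2^k,n)}$ for each piece, but the naive count yields $2\sqrt2$ rather than $2$. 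I expect this step to be the main obstacle: getting the constant exactly $2$ (or accepting a harmless absolute constant) at the prime $2$. I would try first to note that $|1+i^b|=\sqrt2$ and $1+i^b=\sqrt2\,\e(\pm1/8)$ according to $b\bmod4$. Splitting by $b\bmod4$ then gives two half-sums, each expressible via characters of $(\Z/2^k)^*$ with a factor $\sqrt2\cdot2^{-k/2}\cdot\frac12$. Estimating each half-sum by orthogonality over the two classes should recover the bound $2\sqrt{(2^k,n)}$. If only an absolute constant results, that suffices for the later application. Finally, multiplying the local bounds completes the argument.
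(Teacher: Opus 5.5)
Your proof is correct, and the step you flag as the main obstacle is not one. For $k\ge2$ you already have, on units,
$g_{2^k}(b)=2^{-k/2}\bigl(\eta_1(b)+i\,\eta_2(b)\bigr)$ with $\eta_1=(2/\cdot)^k$ and $\eta_2=\chi_4\eta_1$, where $\chi_4(b)=(-1)^{(b-1)/2}$. Apply your prime-power bound $\sqrt{2^k(2^k,n)}$ to each of the two twisted sums and multiply by $2^{-k/2}$. This gives exactly $2\sqrt{(2^k,n)}$, with no extra $\sqrt2$, so the ``naive count'' already yields the constant $2$.

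The detour through $1+i^b=\sqrt2\,\e(\pm1/8)$ and the split by $b\bmod 4$ is unnecessary. If you bounded the two half-sums separately, you would in fact lose the factor $\sqrt2$ you were worried about. The split only returns the constant $2$ after you recombine it into $1+i\chi_4(b)$, which is the decomposition you started from. With that correction, multiplying the local bounds gives $2\sqrt{(q,n)}$, with the factor $2$ arising only from the prime $2$. The remaining steps are sound: the CRT factorization of $g_q$, $\chi$ and the additive phase, the substitution $b_p\mapsto b_pq_p$, and the conductor-reduction bound for imprimitive characters.

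Your route differs from the paper's in how the sum is organized. The paper never factors the twisted sum into local pieces. It first proves, for an arbitrary character $\theta\bmod q$, the global bound $\bigl|\sum^{*}_{b\bmod q}\theta(b)\e(-nb/q)\bigr|\le\sqrt{q(q,n)}$. It does this by a fibre argument for the reduction $(\Z/q\Z)^*\to(\Z/Q_0\Z)^*$ with $Q_0=q/(q,n)$. It then uses the Chinese remainder theorem only on $G(b,q)$, writing $g_q$ on units as $q^{-1/2}$ times one character when $q$ is odd, or a unimodular combination of two characters modulo $q$ when $4\mid q$. Applying the global bound to $\chi\eta_1$ and $\chi\eta_2$ finishes the proof.

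You instead localize everything, prove the Gauss-sum bound prime by prime through the conductor, and multiply. This is more hands-on and makes transparent that the odd local factors carry constant $1$, which is close to what the paper's subsequent support lemma extracts. The paper's global version is shorter and treats imprimitive characters of composite modulus in one step. Both arguments rest on the same two inputs: the explicit odd and $2$-adic quadratic Gauss sums, and the Gauss-sum bound for imprimitive characters with an additive twist.
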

\begin{proof}
First, for an arbitrary character $\theta\bmod q$, put
\[
 T_q(\theta,n)=\sum_{b\bmod q}^{*}\theta(b)\e(-nb/q).
\]
Let $d=(q,n)$ and $Q_0=q/d$. Reduction of units modulo $q$ to units
modulo $Q_0$ is surjective. Each fibre has
$\ph(q)/\ph(Q_0)$ elements. The additive character is constant
on each fibre. If $\theta$ is nontrivial on the kernel of this
reduction, its sum on every fibre is zero. Otherwise $\theta$
is induced from a character $\theta_0\bmod Q_0$, and
\[
 T_q(\theta,n)=\frac{\ph(q)}{\ph(Q_0)}
                  T_{Q_0}(\theta_0,n/d).
\]
Since $(n/d,Q_0)=1$, the last sum has the same absolute value as
the ordinary Gauss sum of $\theta_0$. By \eqref{eq:induced} this
is at most $\sqrt{Q_0}$. The case $Q_0=1$ has the same bound
by the usual one-residue convention. Since
$\ph(q)/\ph(Q_0)\le d$, it follows that
\begin{equation}\label{eq:general-gauss}
 |T_q(\theta,n)|\le d\sqrt{Q_0}=\sqrt{q(q,n)}.
\end{equation}

For odd $q$, the quadratic Gauss-sum evaluation gives
\[
 g_q(b)=\epsilon_q q^{-1/2}\left(\frac bq\right),
 \qquad |\epsilon_q|=1\quad((b,q)=1).
\]
This includes $q=1$ by convention. Thus \eqref{eq:general-gauss}
proves \eqref{eq:complete} with constant $1$ when $q$ is odd.
If $q\equiv2\pmod4$, the sum vanishes by \eqref{eq:gauss}.

It remains to handle $4\mid q$. For odd $b$, define
\[
 \chi_4(b)=(-1)^{(b-1)/2},\qquad
 \chi_8(b)=(-1)^{(b^2-1)/8},\qquad
 \chi_{-8}(b)=\chi_4(b)\chi_8(b).
\]
The explicit evaluations at powers of $2$ are
\begin{equation}\label{eq:two-adic}
 g_{2^v}(b)=2^{-v/2}
 \begin{cases}
 1+i\chi_4(b),&v\ge2\text{ even},\\
 \chi_8(b)+i\chi_{-8}(b),&v\ge3\text{ odd}.
 \end{cases}
\end{equation}
For example, the base cases follow by summing the squares modulo
$4$ and $8$. For $v\ge4$, splitting into even and odd arguments
and pairing the odd arguments gives
$G(b,2^v)=2G(b,2^{v-2})$, proving the formulas by induction.
The odd-modulus evaluation and these formulas are also recorded
in \cite[Lemmas 5.1 and 6.1]{MP}.

Write $q=2^v u$ with $u$ odd. The Chinese remainder theorem gives
\[
 G(b,q)=G(bu,2^v)G(b2^v,u).
\]
Combining the odd-modulus formula with \eqref{eq:two-adic} shows
that, on the units modulo $q$,
\[
 g_q(b)=q^{-1/2}\bigl(c_1\eta_1(b)+c_2\eta_2(b)\bigr),
 \qquad |c_1|=|c_2|=1,
\]
for two Dirichlet characters $\eta_1,\eta_2\bmod q$.
The constant factors resulting from the fixed unit arguments $u$
and $2^v$ have absolute value $1$.
Apply \eqref{eq:general-gauss} to $\chi\eta_1$ and $\chi\eta_2$
and divide by $\sqrt q$. This proves \eqref{eq:complete}.
\end{proof}

\begin{lemma}\label{lem:divisor}
For every $\nu>0$, uniformly for integers $X<n\le2X$ and real
$1\le Y_0\le X$,
\begin{equation}\label{eq:divisor}
 \sum_{s\le Y_0}\frac{\sqrt{(s,n)}}s\ll_\nu X^\nu.
\end{equation}
\end{lemma}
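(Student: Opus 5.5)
We group the terms of \eqref{eq:divisor} according to the value of the gcd. Put $d=(s,n)$. Then $d\mid n$, and $s=dt$ with $t\le Y_0/d$. Dropping the coprimality condition on $t$ only enlarges the sum, since all terms are nonnegative. This gives
\[
 \sum_{s\le Y_0}\frac{\sqrt{(s,n)}}s
 \le\sum_{d\mid n}\frac{\sqrt d}{d}\sum_{t\le Y_0/d}\frac1t
 \le(1+\log Y_0)\sum_{d\mid n}d^{-1/2}.
\]

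It remains to bound the divisor sum. Trivially $\sum_{d\mid n}d^{-1/2}\le\tau(n)$, where $\tau$ is the divisor function. The classical bound $\tau(n)\ll_\nu n^{\nu/2}$, together with $n\le2X$, gives $\tau(n)\ll_\nu X^{\nu/2}$. Also $Y_0\le X$, so $\log Y_0\le\log X\ll_\nu X^{\nu/2}$. Multiplying the two bounds yields $\ll_\nu X^\nu$, uniformly in $n$ and $Y_0$, which is \eqref{eq:divisor}.

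No step is a real obstacle. The only points to check are uniformity and the small cases. The bound depends on $n$ only through $\tau(n)$, and on $Y_0$ only through $\log Y_0\le\log X$. When $Y_0<d$ the inner sum is empty, which is harmless. For bounded $X$ the constant can be adjusted.
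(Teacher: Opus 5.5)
Your proof is correct and is essentially the paper's argument. The paper majorizes $\sqrt{(s,n)}$ by $\sum_{d\mid(s,n)}\sqrt d$ and interchanges the sums, whereas you partition exactly by the value $d=(s,n)$; both reach the same bound $(1+\log Y_0)\sum_{d\mid n}d^{-1/2}\le(1+\log Y_0)\tau(n)$ and finish with the divisor bound.
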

\begin{proof}
The inequality $\sqrt{(s,n)}\le\sum_{d\mid(s,n)}\sqrt d$ gives
\[
 \sum_{s\le Y_0}\frac{\sqrt{(s,n)}}s
 \le\sum_{\substack{d\mid n\\d\le Y_0}}\sqrt d
                    \sum_{\substack{s\le Y_0\\d\mid s}}\frac1s
 \le (1+\log Y_0)\sum_{d\mid n}d^{-1/2}.
\]
The last divisor sum is at most the number of divisors of $n$,
which is $\ll_\nu n^\nu$ with any prescribed positive loss.
Decrease that loss to absorb the logarithm and use $n\le2X$.
\end{proof}

\section{The local nonvanishing restriction}

Fix an integer $X<n\le2X$. Call a primitive character $\psi$ of
conductor $r$ \emph{admissible for $n$} if the following conditions hold:
\begin{enumerate}
\item For every odd prime $p\mid(r,n)$, one has $p\Vert r$, and the
local component of $\psi$ modulo $p$ is the Legendre character
$(\cdot/p)$.
\item If $n$ is even, then the exponent of $2$ in $r$ is at most $3$.
\end{enumerate}
These are necessary conditions only; we do not assert that every
admissible character has a nonzero contribution.

\begin{lemma}\label{lem:support}
Let $\psi$ be primitive of conductor $r$, and let $\chi\bmod q$
be the character it induces, where $r\mid q$. If
\[
 \tau(\overline\chi)
 \sum_{b\bmod q}^{*}g_q(b)\chi(b)\e(-nb/q)\ne0,
\]
then $\psi$ is admissible for $n$.
Consequently, writing $d$ for the odd part of $(r,n)$ and $t=r/d$,
one has
\begin{equation}\label{eq:forced-component}
 d\mid n,\quad d\text{ odd and squarefree},\quad(d,t)=1,
 \quad\psi=\lambda_d\psi_t,\qquad
 \lambda_d=(\cdot/d),
\end{equation}
where $\psi_t$ is primitive of conductor $t$, and
\begin{equation}\label{eq:gcd-weight}
 \sqrt{\frac{(r,n)}r}\le\frac{\sqrt8}{\sqrt t}.
\end{equation}
\end{lemma}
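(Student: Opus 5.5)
Work prime by prime. By the Chinese remainder theorem write $q=\prod p^{v}$. The sum $\sum^*_{b}g_q(b)\chi(b)\e(-nb/q)$ then factors as a product of local sums over the prime powers $p^v\Vert q$. Each local factor has the shape $\sum^*_{b\bmod p^v}g_{p^v}(b c_p)\chi_p(b)\e(-n b c_p'/p^v)$, with fixed units $c_p,c_p'$ coming from the CRT decomposition $G(b,q)=\prod G(b\,q/p^v,p^v)$ already used in Lemma~\ref{lem:complete}. Nonvanishing of the product forces every local factor to be nonzero. Likewise $\tau(\overline\chi)\ne0$ forces, by \eqref{eq:induced}, $q=rs$ with $s$ squarefree and $(r,s)=1$. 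So for each $p\mid r$ we have $p^v\Vert q$ with $v=v_p(r)$, and the local component $\chi_p=\psi_p$ is primitive modulo $p^v$.

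For an odd prime $p\mid(r,n)$ the local factor is $\epsilon\,p^{-v/2}\sum^*_{b}(b/p)^{v}\psi_p(b)\e(-n'b/p^v)$ with $p\mid n'$. Set $\theta=(\cdot/p)^v\psi_p$. The first part of the proof of Lemma~\ref{lem:complete} shows this sum vanishes unless $\theta$ is induced from a character modulo $p^{v}/(p^v,n')$. That modulus is at most $p^{v-1}$, so $\theta$ must have conductor dividing $p^{v-1}$. If $v\ge2$, then $\psi_p$ is primitive of conductor $p^v$, and multiplying by the character $(\cdot/p)^v$, whose conductor is at most $p$, keeps conductor $p^v$. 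This is a contradiction, so $v=1$. Then $\theta=(\cdot/p)\psi_p$ must be trivial, so $\psi_p=(\cdot/p)$. This gives condition 1.

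For $p=2$ with $n$ even, suppose $2^v\Vert r$ with $v\ge4$. If $v=1$, then $\psi$ could not be primitive, and $q\equiv2\pmod 4$ would kill the sum by \eqref{eq:gauss} anyway. By \eqref{eq:two-adic}, the local factor is a combination of $\sum^*\eta\psi_2(b)\e(-n'b/2^v)$ with $\eta\in\{1,\chi_4,\chi_8,\chi_{-8}\}$, all of conductor dividing $8$. Hence $\eta\psi_2$ still has conductor $2^v$, while $2\mid n'$ requires conductor dividing $2^{v-1}$. So each term vanishes, which forces $v\le3$ and gives condition 2.

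For the consequences, condition 1 says that the odd part $d$ of $(r,n)$ is squarefree. It also says $d\mid n$ and $(d,t)=1$ with $t=r/d$, since each such $p$ exactly divides $r$. The components of $\psi$ at these primes multiply to $\lambda_d$, so $\psi=\lambda_d\psi_t$ with $\psi_t$ primitive modulo $t$. For \eqref{eq:gcd-weight}, $(r,n)=d\cdot 2^{\min(v_2(r),v_2(n))}$ with the $2$-part at most $8$ by condition 2. Hence $(r,n)/r\le 8d/r=8/t$.

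The main obstacle I expect is the bookkeeping of the CRT twisting. The fixed unit multipliers $c_p$ and the local additive phases must not affect vanishing. They don't, because replacing $b$ by $bc$ only multiplies the sum by a unimodular constant and preserves $p$-adic valuations of $n'$. One must also check that the conductor of $\eta\psi_p$ stays $p^v$, which holds because $\eta$ has strictly smaller conductor.
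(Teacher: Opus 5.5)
Your proof is correct and follows essentially the paper's argument: the Gauss-sum evaluation writes $g_q$ in terms of characters of conductor at most $p$ at odd primes and at most $8$ at $2$, the fibre criterion forces the twisted character to factor through a modulus divisible by a smaller power of $p$, and primitivity of $\psi_p$ survives such a twist, which gives the contradiction. The only difference is organizational. You first split the complete sum into local sums via the Chinese remainder theorem. The paper instead keeps the global decomposition $g_q=q^{-1/2}(c_1\eta_1+c_2\eta_2)$ and reads off all local conditions from the requirement that some $\chi\eta_j$ factor through modulus $q/(q,n)$.
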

\begin{proof}
By \eqref{eq:induced}, nonvanishing of $\tau(\overline\chi)$
requires $q=rs$ with $s$ squarefree and $(r,s)=1$.
If $q\equiv2\pmod4$, the complete square sum vanishes, so this
case cannot occur.

The proof of Lemma~\ref{lem:complete} expresses $g_q(b)$ as
$q^{-1/2}$ times a linear combination of at most two characters
$\eta_j\bmod q$. At an odd prime power $p^v\Vert q$, each $\eta_j$
has local component $(\cdot/p)^v$. Its local component at $2$
has conductor at most $8$. A nonzero complete residue sum requires
at least one of the sums $T_q(\chi\eta_j,n)$ to be nonzero.
The fibre argument in that lemma implies
\begin{equation}\label{eq:factor-condition}
 T_q(\theta,n)\ne0\quad\Longrightarrow\quad
 \theta\text{ factors through modulus }q/(q,n).
\end{equation}

Suppose an odd prime $p$ divides both $r$ and $n$, and write
$p^v\Vert r$. Since $(r,s)=1$, also $p^v\Vert q$.
If $v\ge2$, the local component of $\psi$ is primitive modulo
$p^v$, whereas the twisting character has conductor at most $p$.
Their product is still primitive modulo $p^v$: if it factored
modulo $p^{v-1}$, multiplying by the inverse twist would show that
$\psi$ did too. This contradicts \eqref{eq:factor-condition}, since
$p\mid n$. Therefore $v=1$.
In this case \eqref{eq:factor-condition} forces the twisted local
character to be principal, so the local component of $\psi$ equals
$(\cdot/p)$. This proves the first admissibility condition.

If $n$ is even and $2^v\Vert r$ with $v\ge4$, the same argument
applies: the local component of $\psi$ has conductor $2^v$, while
the twist has conductor at most $8$, which divides $2^{v-1}$.
Their product cannot factor modulo $2^{v-1}$, again contradicting
\eqref{eq:factor-condition}. This proves the second condition.

The odd part $d$ of $(r,n)$ is therefore squarefree and coprime
to $t=r/d$. The components at its primes are fixed by the first
condition. Chinese remainder factorization of a primitive character
then gives \eqref{eq:forced-component}; the component $\psi_t$
is primitive. Here $\lambda_1$ denotes the character of conductor $1$.
Finally, the $2$-part of $(r,n)$ is at most $8$ (and equals $1$
if $n$ is odd). Thus $(r,n)\le8d$, proving \eqref{eq:gcd-weight}.
\end{proof}

\section{Counting the restricted character families}

\begin{lemma}\label{lem:restricted}
Fix a group $I$ of zeros, and abbreviate $a=a_I$.
For $1\le K\le P$ and every $\nu>0$, uniformly for integers
$X<n\le2X$,
\begin{equation}\label{eq:restricted}
 \sum_{r\le K}\sqrt{\frac{(r,n)}r}
 \sum_{\substack{\psi\bmod r\ \mathrm{primitive}\\
                 \psi\ \mathrm{admissible\ for}\ n}}
 Z_{\psi,I}
 \ll_\nu X^\nu K^{3a/4}.
\end{equation}
\end{lemma}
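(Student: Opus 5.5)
The plan is to use the structural description from Lemma~\ref{lem:support} and then split dyadically according to the size of the varying part $t$ of the conductor. For each $t$, one of two trivial bounds will be used: the band density estimate \eqref{eq:band-density}, or a direct count of characters. No new analytic input should be needed; the exponent $3a/4$ is exactly the balance point between the two bounds.

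First, the reduction. By Lemma~\ref{lem:support}, every admissible $\psi$ of conductor $r$ can be written as $\psi=\lambda_d\psi_t$, where $r=dt$, $d\mid n$ is odd and squarefree, and $\psi_t$ is primitive of conductor $t$. By \eqref{eq:gcd-weight} the weight satisfies $\sqrt{(r,n)/r}\le\sqrt8\,t^{-1/2}$. The map $\psi\mapsto(d,\psi_t)$ is injective, since $d$ is determined by $r$ and $n$, and then $\psi_t$ is the $t$-component of $\psi$. I would therefore split the left side of \eqref{eq:restricted} into $O(\log K)$ dyadic ranges $T<t\le2T$, with $T\le K$. Each range contributes at most $\sqrt8\,T^{-1/2}$ times the sum of $Z_{\psi,I}$ over admissible $\psi$ with $r\le K$ and $t\in(T,2T]$.

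Second, the large-$T$ bound. For $T\ge K^{a/2}$, I would drop the admissibility restriction and the condition on $t$ entirely. This leaves at most the sum of $Z_{r,I}$ over all $r\le K$, which by \eqref{eq:band-density} is $\ll_\nu K^{a+\nu}$. The range then contributes $\ll T^{-1/2}K^{a+\nu}\le K^{3a/4+\nu}$.

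Third, the small-$T$ bound. For $T<K^{a/2}$, I would use the parametrization instead. There are at most $\tau(n)\ll_\nu X^\nu$ choices of $d$, and for each $d$ there are at most $\sum_{t\le2T}\ph(t)\ll T^2$ choices of $\psi_t$. Each $Z_{\psi,I}\ll\log(r+2)\ll\log X$ by ordinary zero counting. The range therefore contributes $\ll X^\nu(\log X)\,T^{-1/2}T^2=X^\nu(\log X)\,T^{3/2}\le X^\nu(\log X)\,K^{3a/4}$. This uses only $a\le2$ (so that $K^{a/2}\le K$) and applies equally to the group $I_-$ with $a=2$.

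Finally, I would sum over the $O(\log K)$ dyadic ranges. Since $K\le P\le X$, the factors $K^\nu$ and $\log^2X$ are absorbed into $X^\nu$ after decreasing $\nu$, which gives \eqref{eq:restricted}. I do not expect a serious obstacle. The only points needing care are the injectivity of $\psi\mapsto(d,\psi_t)$, which guarantees that no character is counted twice when summing over $d$, and keeping the bound uniform in $n$, which the divisor bound $\tau(n)\ll_\nu X^\nu$ provides.
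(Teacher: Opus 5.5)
Your proposal is correct and follows the paper's proof essentially step for step. Both use the decomposition $\psi=\lambda_d\psi_t$, the weight bound $\sqrt8\,T^{-1/2}$ on dyadic $t$-blocks, and the trivial count $T^2\log K$ against the density bound $K^{a+\nu}$, balanced at $T=K^{a/2}$. The only cosmetic difference is that in the large-$T$ range you drop the parametrization entirely instead of fixing $d$ first, which saves a harmless factor $\tau(n)$ there.
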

\begin{proof}
For each character in the sum, use the unique decomposition in
\eqref{eq:forced-component}. Fix $d$ and a dyadic block $T\le t<2T$.
By \eqref{eq:gcd-weight}, the contribution of this block is at most
$\sqrt8\,T^{-1/2}V_{d,T}$, where we may enlarge to
\[
 V_{d,T}=
 \sum_{\substack{T\le t<2T\\dt\le K\\(d,t)=1}}
 \sum_{\psi_t\bmod t}^{*}Z_{\lambda_d\psi_t,I}.
\]
This enlargement drops further admissibility conditions only after
the weight has been bounded for the original characters.

There are two independent bounds for $V_{d,T}$.
Since $Z_{\psi,I}\ll\log(r+2)$ and
$\sum_{T\le t<2T}\ph(t)\ll T^2$, ordinary zero counting gives
\begin{equation}\label{eq:restricted-trivial}
 V_{d,T}\ll T^2\log(K+2).
\end{equation}
On the other hand, for fixed $d$, the characters $\lambda_d\psi_t$
are distinct primitive characters of conductors $dt\le K$.
Primitivity follows from $(d,t)=1$ and primitivity of both factors;
the map $\psi_t\mapsto\lambda_d\psi_t$ is injective.
Thus \eqref{eq:band-density} gives, for every small $\xi>0$,
\begin{equation}\label{eq:restricted-density}
 V_{d,T}\ll_\xi K^{a+\xi}.
\end{equation}
Combine \eqref{eq:restricted-trivial} and
\eqref{eq:restricted-density}. Absorbing logarithms and a suitably
small $\xi$ into $X^\nu$, a block contributes at most
\[
 \ll_\nu X^\nu\min\left(T^{3/2},K^aT^{-1/2}\right)
 \le X^\nu K^{3a/4}.
\]
The last inequality follows by splitting at $T=K^{a/2}$:
use the first bound below that threshold and the second above it.
There are $O(\log(2K))$ blocks and at most $\tau(n)$ possible
squarefree odd divisors $d$ of $n$. Since $\tau(n)\ll_\xi X^\xi$
for any $\xi>0$, choosing the auxiliary losses sufficiently small
proves \eqref{eq:restricted} with any prescribed $\nu>0$.
The conductor $1$ is included in the block $d=t=1$.
\end{proof}

\section{A sharper estimate for a fixed quadratic twist}

The improvement needed for the upper groups uses both parts of the
conductor, instead of merely counting the family trivially.
For an odd squarefree $d$, set $\lambda_d=(\cdot/d)$ and define
\[
 \mathcal C_d(T,K)=
 \{\lambda_d\psi_t:T\le t<2T,\ dt\le K,\ (d,t)=1,
                         \ \psi_t\bmod t\text{ primitive}\}.
\]
Every member is primitive of conductor $dt$.
For any primitive character $\chi$, write
\[
 Z_\sigma(\chi)=
 \sum_{\substack{L(\rho,\chi)=0\\0<\Ree\rho<1,\ \Ree\rho\ge\sigma}}
                 (1+|\Im\rho|)^{-10}.
\]

\begin{lemma}\label{lem:twisted-density}
Let $K\ge2$, $1\le T\le K$, $d$ be odd and squarefree, and
$\sigma_0\le\sigma\le7/8$. Put $b=(1-\sigma)/(2-\sigma)$.
For every $\nu>0$, uniformly in $d,T,K$,
\begin{equation}\label{eq:twisted-density}
 \sum_{\chi\in\mathcal C_d(T,K)}Z_\sigma(\chi)
 \ll_\nu K^\nu\bigl((K+T^2)T^4\bigr)^b.
\end{equation}
\end{lemma}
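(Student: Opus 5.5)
Zero detection in the style of Montgomery/Huxley for the family $\mathcal C_d(T,K)$, with the large-sieve input depending only on the varying conductor $t$, and the fourth-moment input depending on the full conductor $dt\le K$. The exponent $b=(1-\sigma)/(2-\sigma)$ is Ingham's: if a first-moment (large-sieve) quantity has size $\mathcal L$ and a fourth-moment quantity has size $\mathcal F$, the method gives a count of roughly $(\mathcal L^{?}\mathcal F)^{b}$-type. So I aim to show that the relevant "large-sieve size" is $T^2$ (in the $t$-aspect, heights absorbed by the weight) and the "fourth-moment size" is $(K+T^2)T^2$, whose product gives $(K+T^2)T^4$.

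The steps are as follows.

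First, I dyadically split heights $|\Im\rho|\asymp H$; the weight $(1+H)^{-10}$ kills any polynomial loss in $H$, so it suffices to count zeros with $|\gamma|\le H$ with bounds $\ll (KH)^{\nu}H^{O(1)}((K+T^2)T^4)^b$.

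Second, I use a standard zero detector. With a mollifier $M_x(s,\chi)=\sum_{m\le x}\mu(m)\chi(m)m^{-s}$, a zero $\rho$ with $\beta\ge\sigma$ forces, after the usual smoothing with $\e^{-m/y}$ and a contour shift to $\Ree s=1/2-\beta$, either a large Dirichlet polynomial
\[
 \sum_{x<m\le y^{1+\nu}}a_m\chi(m)m^{-\rho}\gg1,\qquad |a_m|\le\tau(m),
\]
or a large value $|L(1/2+i\tau,\chi)M_x(1/2+i\tau,\chi)|\gg y^{\sigma-1/2}$ at a nearby $\tau$. I pass to well-spaced zeros as in \cite[Section 2]{CZ}.

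Third, I bound the type I zeros. Write $\chi=\lambda_d\psi_t$ and absorb $\lambda_d$ into the coefficients, $a_m\mapsto a_m\lambda_d(m)$; this is legitimate because $d$ is fixed. The hybrid large sieve \eqref{eq:ordinary-LS}, applied over $\psi_t$ modulo $t<2T$, then gives for $R_1$ detected zeros, after dividing $m$ into dyadic blocks and raising to a suitable power $k$ so that the length is $\ge T^2H$,
\[
 R_1\ll (T^2H)^{2(1-\sigma)/(\ldots)}\cdots,
\]
in the Montgomery form $R_1\ll (T^2H+ N)N^{1-2\sigma}$. The key gain here is that only $T^2$ appears, not $K^2$.

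Fourth, I bound the type II zeros by a fourth moment. I need
\[
 \sum_{\chi\in\mathcal C_d(T,K)}\int_{-H}^{H}|L(1/2+i\tau,\chi)|^4d\tau\ll (KTH)^{\nu}(K+T^2)T^2H^{O(1)}.
\]
I would obtain this from the approximate functional equation with the full conductor $dt$: $L(1/2+i\tau,\chi)^2$ is a sum of length about $\sqrt{dtH}\cdot\sqrt{dtH}=dtH\le KH$ of $\tau_2(m)\lambda_d(m)\psi_t(m)m^{-1/2-i\tau}$ plus its dual. Then the hybrid large sieve over $\psi_t$, which has $T^2H+KH$ in the bracket, gives mean value $(T^2+K)H^{1+\nu}$. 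The twist $\lambda_d$ again goes into the coefficients, and the root number has modulus $1$. Combining with $|M_x|^2$ via Hölder and counting large values $R_2\ll (K+T^2)T^2\,\cdot\, y^{-4(\sigma-1/2)}x^{O(\nu)}$ in Huxley's way, then optimizing $x,y$ as in Ingham's proof of \eqref{eq:ingham}, produces $R_1+R_2\ll K^\nu((K+T^2)T^4)^{b}$.

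Step four is the main obstacle. One issue is that the dual side of the approximate functional equation for $\lambda_d\psi_t$ involves $\overline{\lambda_d\psi_t}$ with root number $\epsilon(\lambda_d\psi_t)$ depending on $t$. This needs care (Cauchy–Schwarz, separating both sides). The other issue is making the optimization actually yield exponent $b$ with the asymmetric pair $(T^2,(K+T^2)T^2)$. If the direct optimization falls short, I would instead use the cruder Ingham-type parameter choice $y=((K+T^2)T^4)^{1/(2-\sigma)}$ and check the two cases directly.
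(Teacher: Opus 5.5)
Your ingredients coincide with the paper's. You absorb $\lambda_d$ into the coefficients, so the (hybrid) large-sieve constant is $T^2H+M$. The fourth moment comes from the approximate functional equation with the full conductor $dt\le K$, with the root number of modulus one removed by Cauchy's inequality. Your $(T^2+K)H^{1+\nu}$ and the paper's $H(T^2+KH)$ differ only in harmless powers of $H$. These inputs feed a Montgomery-type zero detector.

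\textbf{The gap.} It lies in the final step, which is exactly where $b$ and the shape $(K+T^2)T^4$ are produced.

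Your type II count $R_2\ll(K+T^2)T^2\,y^{-4(\sigma-1/2)}x^{O(\nu)}$ has the form of a fourth-power Chebyshev count in which the mollifier costs only $x^{O(\nu)}$. That is not available: the only pointwise bound is $|M_x(1/2+i\tau,\chi)|\ll x^{1/2}$, so such a count loses $x^2$. It is also not what H\"older's inequality with the mean square of $M_x$ gives.

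Even granting it, minimizing $y^{2-2\sigma}+(K+T^2)T^2y^{2-4\sigma}$ over $y$ gives $((K+T^2)T^2)^{(1-\sigma)/\sigma}$. For $T=1$ this is $K^{(1-\sigma)/\sigma}$, weaker than $K^{b}$. Your fallback $y=((K+T^2)T^4)^{1/(2-\sigma)}$ fails too: with type I polynomials of length up to $y$, the large sieve gives $R_1\ll y^{2-2\sigma}=((K+T^2)T^4)^{2b}$, the square of the target.

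The underlying bookkeeping error is the heuristic ``large-sieve size $T^2$ times fourth-moment size $(K+T^2)T^2$''. In Ingham's method the count is $(\mathcal F\mathcal L^2)^b$, with the large-sieve size squared. The fourth-moment size is $\mathcal F\asymp K+T^2$, which is what your own fourth step actually derives.

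\textbf{The repair.} This is the paper's computation. At well-spaced points, H\"older with exponents $4,2,4$ gives
\[
 R_2\,y^{\sigma-1/2}\ll\Bigl(\sum|L|^4\Bigr)^{1/4}\Bigl(\sum|M_x|^2\Bigr)^{1/2}R_2^{1/4}.
\]
The mollifier's mean square is bounded by the same large sieve, $\ll(T^2H+x)\log x$. Put $D=T^2H$ and $L_0=H(T^2+KH)$. Then
\[
 R_2\ll(KH)^\nu L_0^{1/3}(D+x)^{2/3}y^{2(1-2\sigma)/3},\qquad
 R_1\ll(KH)^\nu\bigl(Dx^{1-2\sigma}+y^{2-2\sigma}\bigr).
\]
Take $x=D$, so no raising to powers is needed. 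Take $y=(L_0D^2)^{1/(4-2\sigma)}$, which is $\ge D$ because $L_0\ge D$ and $4-2\sigma<3$. Then both counts are
\[
 \ll(KH)^\nu(L_0D^2)^b\le(KH)^\nu H^{4b}\bigl((K+T^2)T^4\bigr)^b.
\]
The dyadic height sum against $(1+|\gamma|)^{-10}$ finishes the proof.
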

\begin{proof}
We may suppose the family is nonempty. Remove the character of
conductor $1$ during zero detection. Its weighted zero count is
$O(1)$, so it can be restored at the end.

For this family the large-sieve constant is $T^2+M$, with an
absolute change of constant for $t<2T$. Indeed, in
\eqref{eq:ordinary-LS} replace $c_m$ by $c_m\lambda_d(m)$
and then restrict to the allowed $t$. Since $|\lambda_d(m)|\le1$,
this does not increase the squared norm of the coefficients.
Consequently the hybrid constant at height $H\ge2$ is
\begin{equation}\label{eq:twisted-hybrid}
 \Delta_H(M)\ll_\nu (KHM)^\nu(T^2H+M).
\end{equation}
The same observation applies to mollifier coefficients and to the
Dirichlet-polynomial coefficients used in zero detection. Their lengths
are bounded by a fixed power of $KH$, so the factor $(KHM)^\nu$
is absorbed into an arbitrarily small power of $KH$ below.

We next establish the fourth-moment bound
\begin{equation}\label{eq:twisted-fourth}
 \sum_{\chi\in\mathcal C_d(T,K),\ \chi\ne1}
 \int_{-H}^{H}|L(1/2+iu,\chi)|^4\,du
 \ll_\nu (KH)^\nu H(T^2+KH).
\end{equation}
Use the smooth approximate functional equation for
$L(1/2+iu,\chi)^2$. If $r$ is the conductor of $\chi$, the
analytic conductor of this square is $\asymp r^2(1+|u|)^2$;
both Dirichlet polynomials therefore have length
$\ll (KH)^{1+\xi}$, for arbitrarily small fixed $\xi>0$,
with a negligible tail. Their coefficients, before the smooth weights,
are
\[
 \tau(m)\lambda_d(m)\psi_t(m)m^{-1/2-iu},
 \qquad
 \tau(m)\lambda_d(m)\overline{\psi_t(m)}m^{-1/2+iu}.
\]
The factor multiplying the dual polynomial has absolute value $1$.
Separate the two parities and use Mellin inversion for the smooth
weights. On a line of real part $\xi>0$, the conductor-dependent
factors have size at most $(KH)^{O(\xi)}$; the remaining Mellin
integrals are absolutely convergent with uniform rapidly decreasing
kernels. Cauchy's inequality and \eqref{eq:ordinary-LS}, with the
common factor $\lambda_d$ absorbed into the coefficients, apply
to both polynomials. Conjugation preserves the varying character
family. Since
\[
 \sum_{m\le M}\frac{\tau(m)^2}{m}\ll\log^4(2M),
\]
their squared mean over the characters, for each fixed $|u|\le H$,
is $\ll_\nu(KH)^\nu(T^2+KH)$. Taking $\xi$ sufficiently small
and integrating over $u$ proves \eqref{eq:twisted-fourth}.
Only polynomial lengths and arbitrarily small losses are used here;
there is no assumed moment conjecture.

We give the zero-detection calculation to specify the dependence on
both parameters. It is the argument of \cite[Theorem 2.2 and its
proof]{CZ}, with \eqref{eq:twisted-hybrid} as the large-sieve input.
Put
\[
 D=T^2H,\qquad L_0=H(T^2+KH).
\]
For a mollifier length $U_0$ and a smoothing length $Y_0\ge U_0$,
both bounded by a fixed power of $KH$, that argument gives
\begin{align}
 \sum_{\chi\in\mathcal C_d(T,K),\ \chi\ne1}N(\sigma,H,\chi)
 \ll_\nu(KH)^\nu\bigg(&1+
 L_0^{1/3}(D+U_0)^{2/3}Y_0^{2(1-2\sigma)/3}\notag\\
 &+\sup_{u\ge U_0}(D+u)u^{1-2\sigma}e^{-2u/Y_0}\bigg).
 \label{eq:detector}
\end{align}
Here $U_0$ is the parameter denoted by $X$ in that reference;
the supremum harmlessly enlarges its possible dyadic block lengths.
The first nonconstant term controls zeros detected by the integral
containing $L(s,\chi)\sum_{m\le U_0}\mu(m)\chi(m)m^{-s}$;
H\"older's inequality uses the fourth moment of $L$ and the second
moment of this mollifier. The last term controls zeros detected by
the remaining Dirichlet polynomial. The separation of zeros for each
character, their multiplicities, and the enlargement of height by
$O(\log(KH))$ cost only $(KH)^\nu$. This also explains why using
$T$ in the large sieve and $K$ as the conductor bound is legitimate:
the zero-detection proof only needs the displayed uniform moment
and large-sieve inequalities.

Take
\[
 U_0=D,\qquad Y_0=(L_0D^2)^{1/(4-2\sigma)},
\]
with fixed constant factors if needed. Since $L_0\ge D$ and
$4-2\sigma<3$, we have $Y_0\ge D$. The supremum in
\eqref{eq:detector} is at most a constant times
$D^{2-2\sigma}+Y_0^{2-2\sigma}\ll Y_0^{2-2\sigma}$.
The other nonconstant term is also
\[
 L_0^{1/3}D^{2/3}Y_0^{2(1-2\sigma)/3}
       =Y_0^{2-2\sigma}=(L_0D^2)^b.
\]
It follows that the left side of \eqref{eq:detector} is
\[
 \ll_\nu (KH)^\nu
       \bigl(H^3T^4(T^2+KH)\bigr)^b
 \le (KH)^\nu H^{4b}\bigl((K+T^2)T^4\bigr)^b.
\]
Sum this over dyadic height blocks with the weight
$(1+|\gamma|)^{-10}$. The resulting height series converges,
since $4b<1$ in the stated range (and $\nu$ can be made small).
The first block is bounded using $H=2$.
Restore the conductor-$1$ contribution. This proves
\eqref{eq:twisted-density}.
\end{proof}

\begin{lemma}\label{lem:restricted-high}
Let $I$ be an upper group, $\sigma=\sigma_I$, and
\[
 C(\sigma)=\frac{10-11\sigma}{3}.
\]
For $2\le K\le P$, uniformly for integers $X<n\le2X$,
\begin{equation}\label{eq:restricted-high}
 \sum_{r\le K}\sqrt{\frac{(r,n)}r}
 \sum_{\substack{\psi\bmod r\ \mathrm{primitive}\\
                 \psi\ \mathrm{admissible\ for}\ n}}Z_{\psi,I}
 \ll_\nu X^\nu K^{C(\sigma)}.
\end{equation}
\end{lemma}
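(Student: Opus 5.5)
The plan is to repeat the block decomposition from the proof of Lemma~\ref{lem:restricted}, and to add Lemma~\ref{lem:twisted-density} as a third bound for each block. Every admissible $\psi$ is written uniquely as $\lambda_d\psi_t$ by \eqref{eq:forced-component}. We fix the odd squarefree divisor $d\mid n$ and a dyadic block $T\le t<2T$. By \eqref{eq:gcd-weight} the block contributes at most $\sqrt8\,T^{-1/2}V_{d,T}$, where $V_{d,T}$ is the sum of $Z_{\chi,I}$ over $\chi\in\mathcal C_d(T,K)$. Since $\rho\in I$ forces $\Ree\rho\ge\sigma_I=\sigma$, we have $Z_{\chi,I}\le Z_\sigma(\chi)$. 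The number of pairs $(d,T)$ is $O_\xi(X^\xi\log 2K)$, so it suffices to show that each block is $\ll_\nu X^\nu K^{C(\sigma)}$.

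Put $b=(1-\sigma)/(2-\sigma)$. For the block $T$ I would combine two bounds. The first is the density bound \eqref{eq:restricted-density} with $a=4(1-\sigma)$, which gives $K^{4(1-\sigma)}T^{-1/2}$. The second is Lemma~\ref{lem:twisted-density}, which gives $T^{-1/2}\bigl((K+T^2)T^4\bigr)^b$ up to $K^\nu$. I split the $T$-range at $T=K^{1/2}$.

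For $K^{1/2}\le T\le K$ the twisted bound is $\ll T^{6b-1/2}$. This increases in $T$, since $6b>1/2$ on $[\sigma_0,7/8]$. The density bound decreases in $T$. The two bounds are equal when $T^{6b}=K^{4(1-\sigma)}$, that is, at $T_0=K^{2(2-\sigma)/3}$. There the common value is
\[
K^{4(1-\sigma)-(2-\sigma)/3}=K^{(10-11\sigma)/3}=K^{C(\sigma)}.
\]
Note that $T_0\ge K^{1/2}$. If $T_0>K$, only the increasing bound is relevant, and it is even smaller on the range. Hence the minimum of the two bounds is at most $K^{C(\sigma)}$ throughout this range.

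For $1\le T\le K^{1/2}$ the twisted bound is $\ll K^bT^{4b-1/2}$. This is monotone in $T$ and so is maximised at an endpoint. Its value at $T=1$ is $K^b$. Its value at $T=K^{1/2}$ is $K^{3b-1/4}$, which coincides with the upper-range formula at $T=K^{1/2}\le T_0$, so it is already $\le K^{C(\sigma)}$ by the previous step.

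The step requiring care is the elementary inequality $b\le C(\sigma)$ on $[\sigma_0,7/8]$, i.e.\ $3(1-\sigma)\le(10-11\sigma)(2-\sigma)$. The difference $(10-11\sigma)(2-\sigma)-3(1-\sigma)$ is a convex quadratic in $\sigma$ with vertex at $\sigma=\tfrac{29}{22}>\tfrac78$, so on $[\sigma_0,7/8]$ it is decreasing and its minimum is at $\sigma=7/8$. There $b=1/9$ and $C=1/8$, so the inequality holds with room, and hence on the whole interval. The trivial bound \eqref{eq:restricted-trivial} is not needed.

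Finally, the $K^\nu$ and $K^\xi$ losses, the logarithms, $\tau(n)\ll X^\xi$, and the $O(\log K)$ blocks are all absorbed into $X^\nu$, since $K\le P\le X$. The conductor-$1$ character lies in the block $d=t=1$ and is covered by Lemma~\ref{lem:twisted-density}, which includes it. This proves \eqref{eq:restricted-high}.
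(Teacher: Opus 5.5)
Your proposal is correct and follows the paper's proof essentially step for step. It uses the same $(d,T)$ block decomposition and the same pairing of Lemma~\ref{lem:twisted-density} with the density bound, split at $T=K^{1/2}$, with the crossing point $T_0=K^{2(2-\sigma)/3}$ giving $K^{C(\sigma)}$. The only differences are cosmetic: you get $3b-1/4\le C(\sigma)$ from the monotonicity of the upper-range bound up to $T_0$, and $b\le C(\sigma)$ from the convex quadratic $(10-11\sigma)(2-\sigma)-3(1-\sigma)$, whereas the paper shows that $C-b$ and $C-3b+1/4$ are decreasing and evaluates them at $\sigma=7/8$.
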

\begin{proof}
As in Lemma~\ref{lem:restricted}, fix a squarefree odd $d\mid n$
and a dyadic block $T\le t<2T$. Its contribution is bounded by
$\sqrt8\,T^{-1/2}V_{d,T}$, with the same enlarged family as there.
Put $a=4(1-\sigma)$ and $b=(1-\sigma)/(2-\sigma)$.
Lemma~\ref{lem:twisted-density} and \eqref{eq:band-density} give
\begin{equation}\label{eq:two-family-bounds}
 V_{d,T}\ll_\nu K^\nu
       \min\{((K+T^2)T^4)^b,K^a\}.
\end{equation}
If $T\ge K^{1/2}$, the weighted contribution is bounded by
\[
 \ll_\nu K^\nu\min\{T^{6b-1/2},K^aT^{-1/2}\}.
\]
The first expression increases with $T$ and the second decreases.
They meet at
\[
 T=K^{a/(6b)}=K^{2(2-\sigma)/3},
\]
whose exponent belongs to $[3/4,410/501]\subset[1/2,1]$.
Their common value is
\[
 K^{a-a/(12b)}=K^{(10-11\sigma)/3}=K^{C(\sigma)}.
\]
If $1\le T\le K^{1/2}$, the first bound in
\eqref{eq:two-family-bounds} gives instead
\[
 T^{-1/2}V_{d,T}\ll_\nu K^{b+\nu}T^{4b-1/2}
 \le K^{\max(b,\,3b-1/4)+\nu}.
\]
Both exponents here are less than $C(\sigma)$ throughout
$[\sigma_0,7/8]$. To check this explicitly, the functions
$C-b$ and $C-3b+1/4$ are decreasing, because
$b'(\sigma)=-(2-\sigma)^{-2}$. At $\sigma=7/8$ their values
are $1/72$ and $1/24$, respectively.
Sum over the $O(\log K)$ blocks and over at most $\tau(n)$ divisors
$d$. Absorb these factors into $X^\nu$ to obtain
\eqref{eq:restricted-high}.
\end{proof}

\section{Two estimates for one group of zeros}

Fix a group $I$, and temporarily abbreviate $a=a_I$, $h=h_I$.
For $1<K\le P$, let
\[
 B_{0,I}(n)=\sum_{q\le K}\sum_{b\bmod q}^{*}
 \int_{|\beta|\le1/(qQ)}
 g_q(b)W(\beta)A_{q,I}(b,\beta)
                  \e\bigl(-n(b/q+\beta)\bigr)\,d\beta.
\]
Define $B_{1,I}(n)$ by the same formula over $K<q\le P$.

\begin{lemma}\label{lem:two-estimates}
For every $\nu>0$, uniformly for integers $X<n\le2X$,
\begin{equation}\label{eq:small-band}
 |B_{0,I}(n)|\ll_\nu X^{h-1/2+\nu}K^{3a/4}.
\end{equation}
Moreover,
\begin{equation}\label{eq:large-band}
 \sum_{n\in\Z}|B_{1,I}(n)|^2
 \ll_\nu X^{2h+\nu}
 \begin{cases}
 P^{a-1},&a\ge1,\\
 K^{a-1},&a<1.
 \end{cases}
\end{equation}
\end{lemma}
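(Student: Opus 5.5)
For \eqref{eq:small-band}, insert the definition of $A_{q,I}$ and interchange the sums. For fixed $q\le K$ this gives
\[
 \sum_{b}^{*}g_q(b)A_{q,I}(b,\beta)\e(-nb/q)
 =\frac1{\ph(q)}\sum_{\chi\bmod q}\tau(\overline\chi)E_{\chi^*,I}(\beta)
 \sum_{b}^{*}g_q(b)\chi(b)\e(-nb/q).
\]
By Lemma~\ref{lem:support}, only characters $\chi$ induced by admissible primitive $\psi$ contribute. By \eqref{eq:induced}, each such term has $q=rs$ with $s$ squarefree, $(r,s)=1$ and $|\tau(\overline\chi)|\le\sqrt r$. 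Lemma~\ref{lem:complete} bounds the complete sum by $2\sqrt{(q,n)}\le2\sqrt{(r,n)}\sqrt{(s,n)}$. For the $\beta$-integral, extend it to $\R$ and use \eqref{eq:band-linear} together with \eqref{eq:VW}, which give $\int|W||E_{\psi,I}|\,d\beta\ll X^{h}Z_{\psi,I}\,Y X^{-1}=X^{h-1/2}Z_{\psi,I}$. Using $\ph(q)\gg q^{1-\nu}$, the total is therefore at most
\[
 X^{h-1/2+\nu}\sum_{r\le K}\frac{\sqrt{(r,n)}}{\sqrt r}\sum_{\psi\ \mathrm{adm}}Z_{\psi,I}\sum_{s\le K/r}\frac{\sqrt{(s,n)}}{s}.
\]
Lemma~\ref{lem:divisor} bounds the $s$-sum by $X^\nu$, and Lemma~\ref{lem:restricted} bounds the remaining sum by $X^\nu K^{3a/4}$. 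This gives \eqref{eq:small-band}.

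For \eqref{eq:large-band}, apply Parseval. The arcs $b/q+\beta$ with $q\le P$ and $|\beta|\le1/(qQ)$ are disjoint, so $B_{1,I}(n)$ is the $n$-th Fourier coefficient of a function on $\R/\Z$. Hence
\[
 \sum_n|B_{1,I}(n)|^2\le\sum_{K<q\le P}\int_\R |g_q|^2|W(\beta)|^2\sum_b^{*}|A_{q,I}(b,\beta)|^2\,d\beta .
\]
By \eqref{eq:gauss}, $|g_q(b)|^2\le2/q$. Apply \eqref{eq:orthog}, then \eqref{eq:induced} to get $|\tau(\chi)|^2\le r$ for $\chi$ induced by $\psi\bmod r$, and then \eqref{eq:band-integral}. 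This bounds the sum by
\[
 X^{2h+\nu}\sum_{K<q\le P}\frac{1}{q\ph(q)}\sum_{r\mid q} r\sum_{\psi\bmod r}^{*}Z_{\psi,I}.
\]
Write $q=rs$ and use $1/(q\ph(q))\ll q^{\nu}/(r^2s^2)$. The sum becomes $\sum_{s}s^{-2}\sum_{r>K/s,\,r\le P}r^{-1}Z_{r,I}$. Partial summation with \eqref{eq:band-density} gives
\[
 \sum_{R_1<r\le P}r^{-1}Z_{r,I}\ll P^{a-1+\nu}\ \text{ if } a\ge1,\qquad \ll R_1^{a-1+\nu}\ \text{ if } a<1 .
\]
When $a\ge1$, summing over $s$ costs only a constant. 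When $a<1$, take $R_1=K/s$; the resulting factor $(K/s)^{a-1}s^{-2}=K^{a-1}s^{1-a-2}$ is summable over $s$, since $a\ge1/2$ by \eqref{eq:a-low}. For $I_-$ we have $a=2$ and $h=1/2$, which is consistent with the first case. This gives \eqref{eq:large-band}.

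The main obstacle is the bookkeeping in \eqref{eq:small-band}. The factorization $(q,n)\le(r,n)(s,n)$ has to be made to match Lemma~\ref{lem:restricted}, with the admissibility restriction passed correctly from $\chi$ to its primitive inducer $\psi$; Lemma~\ref{lem:support} is exactly what makes this transfer valid.
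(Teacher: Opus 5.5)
Your proposal is correct and follows the paper's proof essentially step by step. For \eqref{eq:small-band} you use the character expansion with Lemma~\ref{lem:support}, Lemma~\ref{lem:complete}, \eqref{eq:band-linear}, Lemma~\ref{lem:divisor} and Lemma~\ref{lem:restricted}; for \eqref{eq:large-band} you use Parseval with \eqref{eq:gauss}, \eqref{eq:orthog}, \eqref{eq:induced}, \eqref{eq:band-integral} and \eqref{eq:band-density}. The only difference is bookkeeping in the mean square: you keep $s$ outside with weight $s^{-2}$ and apply partial summation over $r>K/s$, whereas the paper first sums the $s$-tail to get the weight $\min(1,r/K)/\ph(r)$ and then uses dyadic blocks in $r$.
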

\begin{proof}
In the small-denominator term, substitute the character expansion
of $A_{q,I}$ before taking absolute values. For fixed $q$, the
integration interval and $E_{\chi^*,I}(\beta)$ do not depend on
the reduced residue $b$. We may therefore apply
Lemma~\ref{lem:complete} to the complete residue sum, retaining
its additive phase.
By Lemma~\ref{lem:support}, only characters induced by primitive
characters admissible for $n$ can contribute.

For $q=rs$, the induced Gauss sum vanishes unless $s$ is squarefree
and $(r,s)=1$. In the other cases its magnitude is $\sqrt r$.
Thus the coefficient of the primitive character is bounded by
\[
 \frac{2\sqrt r}{\ph(rs)}\sqrt{(rs,n)}
 \ll_\nu X^\nu\sqrt{\frac{(r,n)}r}\frac{\sqrt{(s,n)}}s,
\]
where we used $(rs,n)=(r,n)(s,n)$ and
$1/\ph(q)\ll_\nu X^\nu/q$ for $q\le P$.
The band estimate \eqref{eq:band-linear} and the integral bound
\[
 \int_\R |W(\beta)|(1+X|\beta|)^{10}\,d\beta\ll X^{-1/2}
\]
therefore give
\[
 |B_{0,I}(n)|\ll_\nu X^{h-1/2+\nu}
 \sum_{r\le K}\sqrt{\frac{(r,n)}r}
 \sum_{\substack{\psi\bmod r\ \mathrm{primitive}\\
                 \psi\ \mathrm{admissible\ for}\ n}}Z_{\psi,I}
               \sum_{s\le K/r}\frac{\sqrt{(s,n)}}s.
\]
Dropping the squarefree and coprimality restrictions on $s$
increases this upper bound. After Lemma~\ref{lem:divisor}, we have
the useful general bound
\begin{equation}\label{eq:small-reduction}
 |B_{0,I}(n)|\ll_\nu X^{h-1/2+\nu}
 \sum_{r\le K}\sqrt{\frac{(r,n)}r}
 \sum_{\substack{\psi\bmod r\ \mathrm{primitive}\\
                 \psi\ \mathrm{admissible\ for}\ n}}Z_{\psi,I}.
\end{equation}
Apply Lemma~\ref{lem:restricted} and rename the arbitrarily small
losses to prove \eqref{eq:small-band}. The bound is uniform for
every integer $X<n\le2X$; no further exceptional integers have
been excluded. For upper groups we will apply
Lemma~\ref{lem:restricted-high} to \eqref{eq:small-reduction} instead.

For the other estimate, disjointness of the arcs, Parseval,
\eqref{eq:gauss}, and \eqref{eq:orthog} give
\[
 \sum_n|B_{1,I}(n)|^2
 \ll\sum_{K<q\le P}\frac1{q\ph(q)}
 \sum_{\chi\bmod q}|\tau(\chi)|^2
 \int_\R |W(\beta)|^2|E_{\chi^*,I}(\beta)|^2\,d\beta.
\]
For a fixed primitive conductor $r$, its coefficient is bounded by
\begin{align}
 \sum_{\substack{q=rs>K\\q\le P\\(r,s)=1}}
       \frac{r\mu^2(s)}{q\ph(q)}
 &\le\frac1{\ph(r)}\sum_{s>K/r}\frac1{s\ph(s)}\notag\\
 &\ll\frac{\min(1,r/K)}{\ph(r)}.\label{eq:tail-weight}
\end{align}
For completeness,
\[
 \frac{s}{\ph(s)}=\sum_{d\mid s}\frac{\mu^2(d)}{\ph(d)},
 \qquad \sum_{s\le y}\frac{s}{\ph(s)}\ll y;
\]
partial summation gives the tail estimate in \eqref{eq:tail-weight}.
For $K/r<1$, convergence of the full series gives the other part
of the minimum.

Apply \eqref{eq:band-integral} and absorb logarithms and the
factor $r/\ph(r)$ into $X^\nu$. This yields
\[
 \sum_n|B_{1,I}(n)|^2\ll_\nu X^{2h+\nu}
      \sum_{r\le P}\frac{\min(1,r/K)}r Z_{r,I}.
\]
On a dyadic block of size $R$, \eqref{eq:band-density} bounds
the last sum by $\ll_\nu X^\nu\min(1,R/K)R^{a-1}$.
Summing the blocks proves \eqref{eq:large-band}, after renaming
small losses. The conductor $r=1$ is included in the first block.
At $a=1$, absorb the additional logarithm into $X^\nu$.
\end{proof}

Let $R_D(n)$ be the major-arc Fourier contribution obtained by
replacing $A_q$ by $D_q$ over all $q\le P$.
By \eqref{eq:gauss}, \eqref{eq:D-trace}, and Parseval,
\begin{equation}\label{eq:remainder}
 \sum_n|R_D(n)|^2\ll P\log^C X.
\end{equation}
Indeed, the factor $1/q$ from the square sum cancels the factor
$q$ in the trace bound, and the integral of
$|W(\beta)|^2(1+X|\beta|)^{20}$ is bounded.

\section{Absorption and the exponent calculation}

Recall $P=X^{9/20}$ and put $p=9/20$.
For each lower group choose
\begin{equation}\label{eq:K}
 k_I=\frac{4(1-h_I)}{3a_I}-\delta,\qquad K_I=X^{k_I}.
\end{equation}
For the lower bands the limiting cutoff exponent is
\[
 \frac{4(1-\sigma)}{3a(\sigma)}=
 \begin{cases}
 2(2-\sigma)/9,&1/2\le\sigma\le13/20,\\
 3/10,&13/20\le\sigma<\sigma_0.
 \end{cases}
\]
It lies in $[3/10,1/3]$. Replacing $\sigma$ in the numerator by
$h_I$ reduces it by at most $4\omega/3$, since $a_I>1$.
Thus $1<K_I<P$ when $\omega,\delta<1/100$.
For $I_-$, take $k_{I_-}=1/3-\delta$.
Equation \eqref{eq:small-band} now gives
\begin{equation}\label{eq:low-absorb}
 |B_{0,I}(n)|\ll_\nu
 X^{1/2-(3/4)\delta a_I+\nu}
 \qquad(I\text{ a lower group}).
\end{equation}

For an upper group let $B_I(n)$ denote its entire major-arc
contribution over $q\le P$. Apply \eqref{eq:small-reduction}
with $K=P$, and then Lemma~\ref{lem:restricted-high}. This gives
\[
 |B_I(n)|\ll_\nu X^{h_I-1/2+\nu}P^{C(\sigma_I)}.
\]
The limiting exponent is
\begin{align*}
 \sigma-\frac12+\frac9{20}C(\sigma)
 &=\sigma-\frac12+\frac3{20}(10-11\sigma)\\
 &=1-\frac{13\sigma}{20}
 \le\frac12-c_0,\qquad
 c_0=\frac7{3340}>0.
\end{align*}
The maximum occurs at $\sigma=\sigma_0=129/167$.
Accounting for the band width, we have
\begin{equation}\label{eq:high-absorb}
 |B_I(n)|\ll_\nu X^{1/2-c_0+\omega+\nu}
 \qquad(I\text{ an upper group}).
\end{equation}
Choose $\nu<\delta/8$ and $\omega+\nu<c_0/2$.
Equations \eqref{eq:low-absorb} and \eqref{eq:high-absorb}, summed
over the fixed finite number of groups, imply
\begin{equation}\label{eq:small-total}
 B_{\rm small}(n):=
 \sum_{I\,\mathrm{lower}} B_{0,I}(n)
 +\sum_{I\,\mathrm{upper}}B_I(n)
 =o(\sqrt X/\log X)
\end{equation}
uniformly for all integers $X<n\le2X$.
The notation $B_{\rm small}$ refers to the size of the resulting
contribution; it includes all denominators for the upper groups.

Only the terms $B_{1,I}$ from lower groups remain for the mean-square
estimate. Since $a_I>1$, \eqref{eq:large-band}, after division by
$X$, has exponent
\[
 e_I=2h_I-1+\frac9{20}(a_I-1).
\]
For a lower band, put
\[
 F(\sigma)=2\sigma-1+\frac9{20}(a(\sigma)-1).
\]
Then $e_I\le F(\sigma_I)+2\omega$.
For $1/2\le\sigma\le13/20$,
\[
 F'(\sigma)=2-\frac{27}{10(2-\sigma)^2}>0,
 \qquad
 F(13/20)=\frac3{10}+\frac9{20}\left(\frac{14}{9}-1\right)
        =\frac{11}{20}.
\]
For $13/20\le\sigma<\sigma_0$,
\[
 F(\sigma)=2\sigma-1+
       \frac9{20}\left(\frac{40}{9}(1-\sigma)-1\right)
       =\frac{11}{20}.
\]
The additional group $I_-$ gives $e_{I_-}=9/20$.
Consequently, if
$B_{\rm large}(n)=\sum_{I\,\mathrm{lower}}B_{1,I}(n)$, then
Cauchy's inequality over the finite set of lower groups yields
\begin{equation}\label{eq:large-total}
 \sum_n|B_{\rm large}(n)|^2
 \ll_{\omega,\delta,\nu}X^{31/20+2\omega+\nu}.
\end{equation}
The positive margin $c_0$ in \eqref{eq:high-absorb} is fixed,
whereas the loss $2\omega+\nu$ can be made arbitrarily small.

\section{The principal term and its positivity}

The coefficient of the principal integral at denominator $q$ is
\[
 c_q(n)=\frac{\mu(q)}{q\ph(q)}
              \sum_{a\bmod q}^{*}G(a,q)\e(-an/q).
\]
For an odd prime $\ell$, the number of roots of $b^2\equiv n$
modulo $\ell$ is $1+(n/\ell)$. Summing the Ramanujan sum over $b$
therefore gives $\ell(n/\ell)$. Multiplicativity gives
\[
 c_q(n)=\frac{\mu(q)}{\ph(q)}\left(\frac nq\right)
 \quad(q\text{ odd}).
\]
For $q\equiv2\pmod4$, \eqref{eq:gauss} gives $c_q(n)=0$;
for $4\mid q$, $\mu(q)=0$. Define
\begin{equation}\label{eq:SP}
 \SSS_P(n)=\sum_{\substack{q\le P\\q\text{ odd}}}
                 \frac{\mu(q)}{\ph(q)}\left(\frac nq\right).
\end{equation}
At the ends of the major arcs, $X|\beta|\ge X^\eta$.
Thus \eqref{eq:VW} permits extension of the principal integrals
to $\R$ with an error $O_A(X^{-A})$; summing absolute coefficients
costs at most $O(\sqrt P)$.
Equations \eqref{eq:J} and \eqref{eq:theta-major} now give
\begin{equation}\label{eq:decomposition}
 \begin{aligned}
 R_X(n)={}&\kappa\sqrt X\,\SSS_P(n)+B_{\rm small}(n)+B_{\rm large}(n)\\
          &+R_D(n)+R_\m(n)+O(X^{-10}),
 \end{aligned}
\end{equation}
where $R_\m(n)=\int_\m S\Theta\e(-n\alpha)\,d\alpha$.
The Poisson errors are negligible here by $|S(\alpha)|\ll X\log X$.

Exclude temporarily the $O(\sqrt X)$ square integers $n$.
For nonsquare $n$, let $\chi_{4n}=(4n/\cdot)$ be the Kronecker
character, a real nonprincipal character modulo $4n$, possibly
imprimitive. It vanishes at $2$ and satisfies
$\chi_{4n}(q)=(n/q)$ for odd $q$. Set, for $\Ree z>0$,
\[
 F_n(z)=\sum_{q\text{ odd}}
        \frac{\mu(q)\chi_{4n}(q)}{\ph(q)q^z}.
\]
Euler factorization gives
\begin{equation}\label{eq:F}
 F_n(z)=\frac{H_n(z)}{L(1+z,\chi_{4n})},\qquad
 H_n(z)=\prod_{\ell>2}
 \frac{1-\chi_{4n}(\ell)\ell^{-z}/(\ell-1)}
      {1-\chi_{4n}(\ell)\ell^{-1-z}}.
\end{equation}
For every fixed $c<1$, each factor of $H_n$ is
$1+O_c(\ell^{-2+c})$ uniformly for $\Ree z\ge-c$.
It follows that $H_n$ is analytic and uniformly bounded there.
Under \eqref{hyp}, $F_n$ is analytic for $\Ree z>-1/8$.
Imprimitive Euler factors have zeros only on $\Ree s=0$, so do
not change this assertion.

Uniformly on $-1/16\le\Ree z\le1$,
\begin{equation}\label{eq:F-growth}
 |F_n(z)|\ll\bigl(n(2+|\Im z|)\bigr)^C
\end{equation}
for an absolute $C$. One way to obtain the reciprocal-$L$ bound
is to apply Borel--Carath\'eodory to a branch of $\log L(s,\chi_{4n})$
on disks centered at $2+it$. Take outer radius $35/32$ and inner
radius $34/32$. The outer disk stays strictly to the right of
$7/8$, while the inner disk reaches $15/16$. The Euler product
bounds $\log L$ at the center, and standard polynomial growth
bounds $\log|L|$ on the outer disk. This yields the asserted
polynomial bound for $1/L$ and hence \eqref{eq:F-growth}.

Define the completed singular series by
\begin{equation}\label{eq:SSS}
 \SSS(n)=F_n(0)=H_n(0)/L(1,\chi_{4n}).
\end{equation}
The local factors of $H_n(0)$ for character values $1,-1,0$ are
respectively
\[
 1-\frac1{(\ell-1)^2},\qquad (1-\ell^{-2})^{-1},\qquad1.
\]
Consequently $H_n(0)\asymp1$ with absolute positive constants.
Also $L(1,\chi_{4n})>0$ and $L(1,\chi_{4n})\ll\log(2n)$.
For the upper bound, truncate its Dirichlet series at $4n$ and
use periodic character sums of magnitude at most $4n$ for the tail.
Thus
\begin{equation}\label{eq:positive}
 \SSS(n)\gg1/\log X\qquad(X<n\le2X,\ n\text{ nonsquare}).
\end{equation}

\section{Completion by the quadratic large sieve}

\begin{lemma}\label{lem:completion}
Under \eqref{hyp}, for every $\nu>0$,
\begin{equation}\label{eq:completion}
 \sum_{\substack{X<n\le2X\\n\text{ nonsquare}}}
 |\SSS(n)-\SSS_P(n)|^2
 \ll_\nu X^\nu\left(X/P+\sqrt X\right).
\end{equation}
\end{lemma}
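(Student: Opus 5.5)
The plan is to write $\SSS(n)-\SSS_P(n)$ as the tail $\sum_{q>P,\,q\text{ odd}}\mu(q)\chi_{4n}(q)/\ph(q)$, represented through $F_n$ by a smoothed Perron formula, and then to bound the mean square of this tail over $n$ by the quadratic large sieve \eqref{eq:HB}. Concretely, fix a smooth weight $\phi$ with $\phi=1$ on $[0,1]$ and support in $[0,2]$. Split
\[
 \SSS(n)-\SSS_P(n)=\Bigl(F_n(0)-\sum_{q\text{ odd}}\frac{\mu(q)\chi_{4n}(q)}{\ph(q)}\phi(q/P)\Bigr)
 +\sum_{P<q\le2P}\frac{\mu(q)\chi_{4n}(q)}{\ph(q)}\phi(q/P).
\]

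For the first (smooth-tail) bracket, Mellin inversion gives
\[
 \frac1{2\pi i}\int_{(1)}F_n(z)\tilde\phi(z)P^z\,dz .
\]
I would shift this integral to $\Ree z=-1/16$. The residue at $z=0$ is $F_n(0)$, since $\tilde\phi$ has a simple pole at $0$ with residue $1$, and there are no other poles because $F_n$ is analytic for $\Ree z>-1/8$ under \eqref{hyp}. The shifted integral is $\ll n^C P^{-1/16}$ by \eqref{eq:F-growth} and the rapid decay of $\tilde\phi$. That bound is useless pointwise, because $n^C$ is large. So instead I would run the shift with a longer cutoff $P_1=X^{A}$ for a large fixed $A$, so that $n^CP_1^{-1/16}\ll X^{-10}$. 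This leaves only the finite sum over $P<q\le 2P_1$ to control in mean square.

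For that finite sum, split into dyadic blocks $q\sim U$ with $P\le U\le 2P_1$, where the weight is $\mu(q)\phi$-differences$/\ph(q)$ supported on odd squarefree $q$. Write $n=n_1m^2$ with $n_1$ squarefree. Then $\chi_{4n}(q)=(n_1/q)$ for $(q,m)=1$, and quadratic reciprocity turns this into a Jacobi symbol $(q/n_1')$ up to a twist depending on residues mod $4$ and the factor $2$. This matches the flat sums in \eqref{eq:HB}, with $d\approx n_1\le 2X/m^2$ odd squarefree, after splitting $n_1$ by its $2$-part and congruence classes. The condition $(q,m)=1$ is handled by Möbius inversion, which costs $X^\nu$. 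For each $m$, \eqref{eq:HB} with $M=X/m^2$ gives
\[
 \ll X^\nu\Bigl(\frac X{m^2}+U\Bigr)\sum_{q\sim U}\frac1{\ph(q)^2}\ll X^\nu\Bigl(\frac X{m^2U}+1\Bigr).
\]
Summing over $m\le\sqrt{2X}$ gives $X^\nu(X/U+\sqrt X)$. Summing dyadically over $U\ge P$, with $O(\log X)$ blocks, yields $X^\nu(X/P+\sqrt X)$, which is \eqref{eq:completion}.

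The main obstacle is the large-$U$ range, where $U$ runs up to $X^A\gg X$ and the term $U\sum 1/\ph(q)^2\asymp1$ per $m$ is fine. The $\sqrt X$ then comes from the number of $m$, so in fact the bound survives. What needs care is combining blocks with the Perron remainder: the smoothed tail beyond $P_1$ must be taken pointwise negligible, and the intermediate sums must be handled by Cauchy's inequality across $O(\log X)$ blocks. The other delicate point is the bookkeeping converting the Kronecker symbol into flat Jacobi symbols, namely the parity of $n_1$, the sign via $\chi_4$, and the sign character from reciprocity. I would handle these by splitting into $O(1)$ classes with fixed twisting characters modulo $8$ absorbed into $b_q$.
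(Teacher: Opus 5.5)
Your proposal is correct and follows essentially the paper's argument: a smoothed Mellin representation with a long cutoff $X^A$, a contour shift to $\Ree z=-1/16$ justified by \eqref{hyp} and \eqref{eq:F-growth}, reduction to a finite sum over $P<q\le X^{O(1)}$, and then, on each dyadic block, the factorization $n=2^e d s^2$, quadratic reciprocity and \eqref{eq:HB} with $M\asymp X/s^2$, followed by Cauchy's inequality over the $O(\log X)$ blocks. The differences are cosmetic. Your compactly supported weight $\phi(q/P_1)$ replaces the paper's $e^{-q/L}$, so no separate truncation step is needed. The M\"obius inversion for $(q,m)=1$ is unnecessary: for fixed $m$ the indicator $\boldsymbol1_{(q,m)=1}$ can simply be absorbed into $b_q$, as the paper does.
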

\begin{proof}
Let $L=X^A$ with $A$ a sufficiently large fixed constant.
Mellin inversion and a contour shift to $\Ree z=-1/16$, using
\eqref{eq:F-growth} and the exponential decay of $\Gamma(z)$, give
\[
 \sum_{q\text{ odd}}\frac{\mu(q)\chi_{4n}(q)}{\ph(q)}e^{-q/L}
 =\frac1{2\pi i}\int_{(1)}F_n(z)\Gamma(z)L^z\,dz
 =\SSS(n)+O(X^{-10}).
\]
The truncation at $q\le LX$ has exponentially small error.
Replacing $e^{-q/L}$ by $1$ for $q\le P$ costs
$\ll_\theta P^{1+\theta}/L$, which is also negligible after
increasing $A$. Hence, uniformly for the nonsquare $n$ in question,
\begin{equation}\label{eq:finite-tail}
 \SSS(n)-\SSS_P(n)=
 \sum_{\substack{P<q\le LX\\q\text{ odd}}}
       \frac{\mu(q)e^{-q/L}}{\ph(q)}\left(\frac nq\right)
       +O(X^{-10}).
\end{equation}

Consider a dyadic block $U<q\le2U$ of this finite sum.
Write uniquely
\[
 n=2^e d s^2,\qquad e\in\{0,1\},\quad d\text{ odd and squarefree},
 \quad s\ge1.
\]
Thus $s\le\sqrt{2X}$ and $d\le2X/(2^es^2)$.
For $c\in\{1,3\}$ restrict first to $d\equiv c\pmod4$.
Quadratic reciprocity, for odd squarefree $q$, gives
\[
 \left(\frac nq\right)=
 \left(\frac2q\right)^e
 (-1)^{((c-1)/2)((q-1)/2)}
 \left(\frac qd\right)\boldsymbol1_{(q,s)=1}.
\]
For fixed $e,c,s$, incorporate all factors other than $(q/d)$ into
\[
 b_q=\frac{\mu(q)e^{-q/L}}{\ph(q)}
 \boldsymbol1_{U<q\le2U}\boldsymbol1_{q\le LX}
 \boldsymbol1_{(q,s)=1}
 \left(\frac2q\right)^e(-1)^{((c-1)/2)((q-1)/2)}.
\]
Then $\sum_q|b_q|^2\ll_\theta U^{-1+\theta}$.
Apply \eqref{eq:HB} with $M=2X/(2^es^2)$ and length $2U$,
enlarging the sum over $d$ to all positive odd squarefree $d\le M$.
Summing over the two values of $e$, the two residue classes $c$,
and all $s\le\sqrt{2X}$ bounds the block mean square by
\[
 \ll_\theta (XU)^\theta U^\theta
 \sum_{s\le\sqrt{2X}}\left(\frac{X}{s^2U}+1\right)
 \ll_\theta (XU)^\theta U^\theta
                 \left(\frac XU+\sqrt X\right).
\]
Enlargement may include square $n$, which is harmless for this
upper bound on the finite block. Since $U\le X^{A+1}$ with $A$
fixed, choose $\theta$ sufficiently small relative to $\nu$.
Cauchy's inequality over the $O_A(\log X)$ blocks in
\eqref{eq:finite-tail}, followed by summation of $X/U$, proves
\eqref{eq:completion}. This argument includes nonsquare integers
with arbitrarily large square factors.
\end{proof}

\section{Completion of the exceptional-set bound}

For nonsquare $n$ in $X<n\le2X$, put
$M_X(n)=\kappa\sqrt X\,\SSS(n)$.
Combining \eqref{eq:decomposition}, \eqref{eq:minor},
\eqref{eq:large-total}, \eqref{eq:remainder}, and
Lemma~\ref{lem:completion}, we obtain
\begin{align}
 &\sum_{\substack{X<n\le2X\\n\text{ nonsquare}}}
 |R_X(n)-M_X(n)-B_{\rm small}(n)|^2\notag\\
 &\quad\ll_{\omega,\delta,\nu}
 X^{31/20+2\omega+\nu}.
 \label{eq:final-ms}
\end{align}
The minor-arc term is $X^{2+\nu}/P=X^{31/20+\nu}$.
The singular-series completion contributes
$\ll X^\nu(X^2/P+X^{3/2})$, because its mean square is multiplied
by $X$. The elementary remainder contributes only $P\log^CX$.
All are bounded by the right side of \eqref{eq:final-ms}.

Let $R_{\rm pp}(n)$ be the nonnegative contribution to $R_X(n)$
from $n-k^2=p^j$ with $j\ge2$. Then
\begin{equation}\label{eq:pp}
 \sum_{X<n\le2X}R_{\rm pp}(n)
 \ll\sqrt X\sum_{\substack{p^j\le3X\\j\ge2}}\log p
 \ll X\log^2X.
\end{equation}
By \eqref{eq:positive}, there is $c>0$ such that
$M_X(n)\ge c\sqrt X/\log X$ throughout the nonsquare integers
in the interval. By \eqref{eq:small-total}, with the parameter restrictions above,
for sufficiently large $X$,
\[
 |B_{\rm small}(n)|\le c\sqrt X/(4\log X)
\]
uniformly in $n$. Equation \eqref{eq:pp} implies that at most
$O(\sqrt X\log^3X)$ integers satisfy
$R_{\rm pp}(n)>c\sqrt X/(4\log X)$.

If $n$ has no representation as a prime plus an integer square,
then $R_X(n)=R_{\rm pp}(n)$. Outside that prime-power exceptional
set and the $O(\sqrt X)$ square integers, it follows that
\[
 |R_X(n)-M_X(n)-B_{\rm small}(n)|
                      \ge c\sqrt X/(2\log X).
\]
Divide \eqref{eq:final-ms} by a constant multiple of
$X/\log^2X$. We obtain
\[
 \#\{X<n\le2X:n\ne p+k^2\}
 \ll_{\omega,\delta,\nu}
 X^{11/20+2\omega+\nu}\log^2X+\sqrt X\log^3X.
\]
Given $\eps>0$, choose
\[
 0<\omega<\min(1/100,\eps/16,c_0/4),\qquad
 0<\delta<1/100,
\]
and then choose $0<\nu<\min(\delta/8,\eps/8,c_0/4)$.
These choices ensure both the uniform absorption and a total
exponent loss smaller than $\eps$, after absorbing logarithms.
We obtain
\[
 \#\{X<n\le2X:n\ne p+k^2\}\ll_\eps X^{11/20+\eps}.
\]
The constants, including the finite number of bands, now depend
only on $\eps$ and the fixed weights. Summing over dyadic intervals
proves Theorem~\ref{thm:main}.

\begin{remark}
The cutoff $P=X^{9/20}$ makes the minor-arc and singular-series
completion errors have mean-square size $X^{31/20+\nu}$.
The lower zero bands have the same limiting scale. The upper bands
are uniformly smaller than the positive main term, by the new
restricted-family estimate and the explicit margin $7/3340$.
The excluded squares and proper prime powers cost only
$O(\sqrt X\log^3X)$ integers. No optimality of $11/20$ is asserted.
\end{remark}

\section{AI-use declaration and contributions}\label{sec:ai}

ChatGPT, an AI system provided by OpenAI, was used in developing this
work. Its contribution included helping formulate and check the
conditional exceptional-set argument, working through the analytic
estimates and parameter choices, checking algebra, locating and
comparing source statements, and preparing the \LaTeX\ source.

The human authors retain responsibility for the mathematical content,
the use and interpretation of the cited results, and the final text.
The arguments in this manuscript have not been formalized in Lean.

\section*{Authors' addresses}

\noindent\begin{minipage}{\textwidth}
\small
\textbf{J\'anos Pintz}\\
HUN-REN Alfr\'ed R\'enyi Institute of Mathematics\\
Re\'altanoda u.~13--15, H-1053 Budapest, Hungary\\
E-mail: \href{mailto:pintz@renyi.hu}{\nolinkurl{pintz@renyi.hu}}
\end{minipage}

\medskip
\noindent\begin{minipage}{\textwidth}
\small
\textbf{Ingo Alth\"ofer}\\
Friedrich-Schiller University Jena\\
Faculty of Mathematics and Computer Science\\
D-07740 Jena, Germany\\
E-mail: \href{mailto:ingo.althoefer@uni-jena.de}{\nolinkurl{ingo.althoefer@uni-jena.de}}
\end{minipage}
\end{document}